\documentclass[sn-mathphys-num]{sn-jnl}

\usepackage{color}

\usepackage{graphicx}%
\usepackage{multirow}%
\usepackage{amsmath,amssymb,amsfonts}%
\usepackage{amsthm}%
\usepackage{mathrsfs}%
\usepackage[title]{appendix}%
\usepackage{xcolor}%
\usepackage{textcomp}%
\usepackage{manyfoot}%
\usepackage{booktabs}%
\usepackage{algorithm}%
\usepackage{algorithmicx}%
\usepackage{algpseudocode}%
\usepackage{listings}%

\newtheorem{thm}{Theorem}[section]
\newtheorem{cor}[thm]{Corollary}
\newtheorem{lem}[thm]{Lemma}
\newtheorem{prp}[thm]{Proposition}

\theoremstyle{definition}

\newtheorem{rem}[thm]{Remark}

\newcommand{\scr}[1]{\mathscr #1}

\numberwithin{equation}{section} \theoremstyle{remark}

\renewcommand{\d}{\hbox{d}}

\parskip 0.1in
\parindent 0.0in

 
\def\R{\mathbb R}
  
\def\N{\mathbb N}

\def\<{\langle} \def\>{\rangle}  
    \def\E{\mathbb E}
\def\d{\text{\rm{d}}}   
 
\def\RR{\mathscr R}
  
\def\beq{\begin{equation}}

\def\F{\scr F}
\def\e{\text{\rm{e}}}

\def\P{\mathbb P}

\renewcommand{\bar}{\overline}
\renewcommand{\tilde}{\widetilde}

\raggedbottom

\begin{document}

\title[Rivers under noise]{Rivers under Additive Noise}

\author[1]{\fnm{Michael} \sur{Scheutzow}}\email{ms@math.tu-berlin.de}

\author*[2]{\fnm{Michael} \sur{Grinfeld}}\email{m.grinfeld@strath,ac.uk}


\affil[1]{\orgname{Technische Universit\"at Berlin},
  \orgaddress{\street{MA 7-5, Strasse des 17~Juni 136},
    \city{Berlin}}, \postcode{10623},
  \country{Germany}}

\affil*[2]{\orgdiv{Department of Mathematics and Statistics},
  \orgname{University of Strathclyde}, \street{26 Richmond Street},
  \city{Glasgow},
    \postcode{G1 1XH}, \country{UK}}

\abstract{We consider the deterministic and stochastic versions of
      a first order non-auto\-no\-mo\-us differential equation which
      allows us to discuss the persistence of rivers (``fleuves'')
      under additive noise.}

\keywords{stochastic differential equations, rivers, trichotomy}

\pacs[MSC Classification]{34F05, 37H05, 37H30, 60H10}

\maketitle

\section{Introduction}

This paper uses a simple example in the pursuit of understanding the
stochastic counterpart of the river (``fleuve'') phenomenon. We
believe that this example contains all the important ingredients of the
general theory. 

Rivers, a remarkable organising feature of phase portraits of
polynomial non-autonomous first order ODEs were discovered in the 1980s
by nonstandard analysts (see \cite{Blais} for the definitions and
additional references, and \cite{Diener} for connections between
rivers and centre manifolds).  Clearly, an interesting question is to
try to extend the notions introduced in the context of rivers to the
case of stochastic differential equations (SDEs).

The present paper is devoted to an analysis of a simple example of the
stochastic counterpart of a polynomial non-autonomous ordinary
differential equation which exhibits rivers in which a trichotomy of
asymptotic fates of solutions occurs. The structure of the paper is as
follows. In Section~\ref{det} we discuss the structure of solutions of
the deterministic version of our equation \eqref{deter}, and exhibit
the asymptotics of the critical solution; this material is standard
and is helped by the existence of an explicit formula for the general
solution. In Section 3, we briefly study the much simpler
(deterministic and random) unstable {\em linear} case in which there
is a repelling river of sub-exponential growth.  This case is easy to
analyze since again there is an explicit formula for the general
solution and for the repelling river. In Section 4, we state our main
result: the non-autonomous quadratic stochastic differential equation
\eqref{SDE}, the additive noise counterpart of \eqref{deter} also
admits a trichotomy: there is a random repelling river with the
property that trajectories starting above it blow up in finite time
while trajectories starting below the repelling river converge to 0
and the river which separates the two regimes has linear growth as
$t \rightarrow \infty$ (as in the deterministic case). Note that,
contrary to the linear case, there is an attracting solution (any
solution which converges to 0 attracts all trajectories starting below
the repelling river).  We formulate our results as four theorems which
we prove in the subsequent sections. Our main tools are well-known
estimates of exit probabilities of diffusions from intervals of the
real line which we quote in the appendix for the reader's convenience.
Finally, in Section~\ref{conc}, we discuss extending other tools
available in the deterministic setting (Wa\.zewski principle,
asymptotic expansions) to the stochastic context.

\section{The deterministic situation} \label{det}

The deterministic equation we start with is
\begin{equation}\label{deter}
  x'= x^2-tx, \quad t>0, \quad x(0)=x_0.
\end{equation}
It is easy to analyse as the general solution of the initial value
problem can be found explicitly,
\begin{equation}\label{expl}
  x(t) = \frac{2x_0\exp\left( - \frac{t^2}{2} \right)}
  {2- x_0\sqrt{2\pi}\, \hbox{erf}\,\left( \frac{t}{\sqrt{2}} \right)}.
\end{equation}

Here $\hbox{erf}(\cdot)$ is the error function,
\[
  \hbox{erf}\, (z) = \frac{2}{\sqrt{\pi}} \int_0^z e^{-t^2} \, \d t,
\]
$\lim_{z \rightarrow \infty} \hbox{erf}\, (z) = 1$, from which we
immediately obtain the following trichotomy of fates of solutions in
positive time : If $x_0> \sqrt{2/\pi}$, solutions blow up in finite
time; if $x_0= \sqrt{2/\pi}$, $x(t) \rightarrow \infty$ as
$t \rightarrow \infty$, and finally if $x_0< \sqrt{2/\pi}$,
$x(t) \rightarrow 0$ as $t \rightarrow \infty$. We call the solution
(\ref{expl}) with $x_0= \sqrt{2/\pi}$ the \emph{critical solution} and
denote it by $x_c(t)$.

As there is symmetry $x\rightarrow -x$, $t\rightarrow -t$, similar
statements can be made as $t \rightarrow  -\infty$; below we restrict
ourselves to positive times. 

Given the explicit form \eqref{expl} of solutions of \eqref{deter},
the asymptotics of the critical solution as $t \rightarrow \infty$ is
easily computed; it is given by
\begin{equation}\label{asymp}
 x_c(t) \sim   t + \frac{1}{t} - \frac{2}{t^3} + O\left(\frac{1}{t^5}\right).
 \end{equation}

 Note that in terms of rivers, $x_c(t)$ provides a repelling river,
 while $x(t) \equiv 0$ and all the positive semi-orbits that converge
 to it, constitute attracting rivers.

The asymptotic expansion of the repelling river in \eqref{asymp} can
be constructed directly from the equation \eqref{deter} without using
\eqref{expl} by following the procedure explained in \cite{Blais}.

\section{The linear case}

Before investigating the behaviour of equation \eqref{deter} with
additive white noise we study the much easier linear stochastic
differential equation
\begin{equation} \label{lin}
\d X(t)=\big(cX(t)+f(t)\big)\,\d t + \sigma \d W(t),\;t \geq 0,
\end{equation}
where $W(t),\,t \geq 0$ is a standard (one-dimensional) Wiener process (also known as Brownian motion), $c>0$, $\sigma \geq 0$ and $f:[0,\infty)\to \R$ is continuous.

The SDE \eqref{lin} with initial condition $X(0)=x$ has the explicit solution
$$
X(t,x)=\e^{ct}\Big( x + \int_0^t \e^{-cs}f(s)\,\d s + \sigma
\int_0^t\e^{-cs}\,\d W(s)\Big).
$$
It follows that all trajectories are exponentially unstable with
$\frac{\d}{\d x}~X(t,x)~=~\e^{ct}$, $t \geq 0,\,x \in \R$.
We now assume that
$$
\int_0^\infty \e^{-cs}|f(s)|\,\d s<\infty.
$$
Below we will be using improper It\^o integrals. For a rigorous
definition of this mathematical object see, e.g., \cite[Definition
25.10]{Klenke}. Denote the unique solution $X(t),\,t \geq 0$ of
\eqref{lin} with the initial condition
$$
X(0)=-\int_0^\infty\e^{-cs}f(s)\,\d s - \sigma \int_0^\infty\e^{-cs}\,\d W(s)
$$
by $\RR(t),\,t \geq 0$ (here and below $\RR$ stands for {\em river}).
Then
\begin{align*}
  X(t, X(0)) & := \RR(t)= -\e^{ct} \int_t^\infty\e^{-cs}f(s)\,\d s -
                \sigma e^{ct}  \int_t^\infty\e^{-cs}\,\d W(s)\\
  & =  -\e^{ct} \int_t^\infty\e^{-cs}f(s)\,\d s -\sigma A(t),
\end{align*}
where
$$
A(t)= \e^{ct}  \int_t^\infty\e^{-cs}\,\d W(s),\;t \geq 0
$$
is a stationary Ornstein-Uhlenbeck process. It follows that
$$
\lim_{t\to \infty}X(t,x)\e^{-ct}=x-\RR(0),
$$
so solutions go to $+\infty$ (respectively, $-\infty$) exponentially
fast with rate $c$ when $x$ is above (respectively below) $\RR(0)$ while
if  $x=\RR(0)$,
\[
  \big|X(t,\RR(0))\big| = \big|\RR(t)\big|=o\big(\e^{ct}\big).
\]
In particular, if in \eqref{lin} we set $f \equiv 0$, then just like in
the deterministic case ($\sigma \equiv 0$) where
the repelling river given  $X(t,x)\equiv 0$,  we have a trichotomy
with random repelling river given by $\RR(t)$.  


\section{Main results}~\label{stoch}

We now study the additive noise stochastic equivalent of \eqref{deter},
namely, the stochastic differential equation
\begin{equation}\label{SDE}
\d X(t)=X(t)\big(X(t)-t\big)\,\d t + \sigma\,\d W(t), \,X(s)=x,\,t \geq s,
\end{equation}
where $W$ is a one-dimensional Wiener process defined on a probability
space $(\Omega,\F,\P)$, $s \geq 0$, $\sigma>0$, and $x \in \R$.  We
will study the long-time behaviour of the unique local solution of
\eqref{SDE} which we denote by $X_{s,t}(x)$, and we will see that
\eqref{SDE} possesses both a random repelling river $\RR$ (which
satisfies $\RR(t)-t \to 0$ as in the deterministic case \eqref{deter})
and an attracting river of trajectories which converge to 0 as
$t \to \infty$ (again as in \eqref{deter}). Just like in the linear
case, $\RR(0)$ is random.  All trajectories starting above $\RR(s)$ at
time $s \geq 0$ blow up to $\infty$ in finite time while all
trajectories starting below $\RR(s)$ converge to 0 as $t \to
\infty$. It will turn out that $\RR(0)$ will be $-\infty$ with
positive probability. In this case, $\RR(t)$ will start becoming
larger than $-\infty$ at some positive random time and then stay
finite for all future times and just like in the nonlinear
deterministic and the linear random cases it will solve the underlying
equation. All these properties (and more) will follow from four
theorems stated below. Further, Proposition \ref{osc} shows that,
unlike the repelling river in the deterministic case in \eqref{deter},
$\RR(t)$ oscillates around $t$ as $t \to \infty$ almost surely.


Before stating our first main result, we collect some 
properties of the following more general model:
\begin{equation}\label{SDEH}
X(t)=x+\int_s^t X(u)\big(X(u)-u\big)\,\d u + H(t)-H(s),\,t \geq s,
\end{equation}
where $H:[0,\infty)\to \R$ is a given (random or deterministic) continuous function satisfying $H(0)=0$, $x \in \R$, and 
$s \geq 0$. Again we denote the unique local solution  of \eqref{SDEH}
by $X_{s,t}(x)$.
We define the blow-up time by
$$
\beta_s(x):=\lim_{n \to\infty}\inf\{t >s: |X_{s,t}(x)|\geq n\}\in
(s,\infty],\,x \in \R.
$$
Note that the local solution  $X_{s,t}(x)$ depends continuously on $(t,x)$. 
Arguing from the contrary from (\ref{SDEH}), we have the
fundamental monotonicity property:

\begin{lem}\label{monot}
  $x < y$ implies $X_{s,t}(x) \leq X_{s,t}(y)$ for $t \geq s$;
  moreover,   $X_{s,t}(x) < X_{s,t}(y)$ when $\beta_s(x)>t$.
\end{lem}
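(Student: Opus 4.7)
The plan is to exploit the fact that the driving term $H$ in (\ref{SDEH}) is additive and therefore the \emph{same} for the two solutions starting from $x$ and $y$, so that it cancels completely in their difference. More precisely, I would fix $s\ge 0$, $x<y$, let $T:=\min(\beta_s(x),\beta_s(y))$, and study $D(t):=X_{s,t}(y)-X_{s,t}(x)$ on $[s,T)$. Subtracting the two integral equations (\ref{SDEH}), the increments $H(t)-H(s)$ cancel, and the algebraic identity $a(a-u)-b(b-u)=(a-b)(a+b-u)$ rewrites the result as the scalar linear Volterra equation
$$
D(t)=(y-x)+\int_s^t g(u)\,D(u)\,\d u,\qquad g(u):=X_{s,u}(y)+X_{s,u}(x)-u,
$$
with $g$ continuous on every compact subinterval of $[s,T)$.

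Next I would solve this linear equation explicitly by an integrating factor (a one-line Gronwall argument works as well), obtaining
$$
D(t)=(y-x)\exp\!\Bigl(\int_s^t g(u)\,\d u\Bigr)>0 \qquad \text{for all } t\in[s,T).
$$
This is already the strict inequality $X_{s,t}(x)<X_{s,t}(y)$ on the common existence interval, and it also forces $\beta_s(y)\le\beta_s(x)$: a pointwise larger solution to the same forward equation must explode no later than the smaller one. To cover the "moreover" clause for general $t$ with $s\le t<\beta_s(x)$, I would split into the cases $t<\beta_s(y)$ (in which case the strict inequality from the displayed formula applies directly) and $t\ge\beta_s(y)$ (in which case $X_{s,t}(y)=+\infty>X_{s,t}(x)$). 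The weak inequality $X_{s,t}(x)\le X_{s,t}(y)$ is then immediate whenever both sides are finite.

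I do not expect a genuine obstacle here. The decisive observation is that because the noise enters (\ref{SDEH}) additively, the difference of two solutions satisfies a purely deterministic linear ODE; no stochastic calculus is required. The only mild bookkeeping issue is the coexistence of possibly finite explosion times, which is dispatched by the monotonicity of $\beta_s$ in the initial datum established as a by-product of the main computation.
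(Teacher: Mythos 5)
Your proof is correct, and it takes a genuinely different route from the paper's. The paper dispatches the lemma in one line by ``arguing from the contrary from (\ref{SDEH})'' --- i.e., the standard contact-point argument: if the two solutions ever met, uniqueness (backward in time, with locally Lipschitz drift) would force them to coincide at time $s$, contradicting $x<y$. You instead compute the difference $D=X_{s,\cdot}(y)-X_{s,\cdot}(x)$ directly: because the forcing $H$ is additive it cancels, the identity $a(a-u)-b(b-u)=(a-b)(a+b-u)$ turns the difference into a scalar linear ODE $D'=gD$, and the integrating factor gives $D(t)=(y-x)\exp\bigl(\int_s^t g\bigr)>0$ on the common existence interval. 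This is a direct (rather than contradiction) argument, and it has the advantage of being quantitative --- you get an explicit formula for the gap, not just its sign --- which is why it is the style of argument that later reappears in the paper's Corollary~\ref{Rbar}.

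One small point worth tightening. To deduce $\beta_s(y)\le\beta_s(x)$ you appeal to ``a pointwise larger solution explodes no later,'' which tacitly uses that explosion is to $+\infty$; that fact is only established in the paper after this lemma, in Proposition~\ref{Xinf} and the remark following it (and indeed Proposition~\ref{Xinf} for $x>0$ itself invokes Lemma~\ref{monot}). Your explicit formula actually lets you close this loop without that input: if one had $\beta_s(x)<\beta_s(y)$, then on $[s,\beta_s(x))$ the process $X_{s,\cdot}(y)$ is bounded, say $|X_{s,t}(y)|\le M$; from $D'=gD$ with $g=2X_{s,\cdot}(y)-t-D$ one reads off $D'<0$ whenever $D>2M$, so $D$ stays bounded on $[s,\beta_s(x))$, hence so does $X_{s,\cdot}(x)=X_{s,\cdot}(y)-D$, contradicting explosion at $\beta_s(x)$. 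Spelling this out makes your proof fully self-contained. Also note that ``Gronwall'' strictly speaking only gives the upper bound $D\le(y-x)\exp\bigl(\int|g|\bigr)$; to get positivity you really do want the integrating-factor solution of the linear equation, which you correctly wrote down.
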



\begin{prp} \label{Xinf}
  For each $x\in \R$ and $v \geq s\geq 0$, we
  have
  $$
\inf_{s\leq t \leq v} X_{s,t}(x)\geq x\wedge 0+\inf_{s \leq t \leq u \leq v} \big(H(u)-H(t)\big).
$$
\end{prp}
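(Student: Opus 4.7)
The plan is to exploit the one-sided sign structure of the drift in \eqref{SDEH}: because $t \geq 0$, whenever $X(t) \leq 0$ we also have $X(t) - t \leq 0$, so the drift $X(t)(X(t)-t) \geq 0$. Hence on any time interval during which the trajectory lies in $(-\infty, 0]$, the drift can only push it upward, and any downward excursion must be paid for by a negative increment of the forcing term $H$. This will let me localise the problem to an interval on which the deterministic part of \eqref{SDEH} is non-negative, reducing the bound to a comparison of two values of $H$.

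Concretely, I would fix $x,s,v$, restrict to $v < \beta_s(x)$ so that $t \mapsto X_{s,t}(x)$ is continuous on the compact interval $[s,v]$, and let $m$ and $t^{*} \in [s,v]$ denote the infimum and a minimiser. If $m \geq x \wedge 0$, the conclusion is immediate, since the inner infimum $\inf_{s \leq t \leq u \leq v}(H(u)-H(t))$ is non-positive (take $t=u$). Otherwise $m < x \wedge 0 \leq 0$, so I set
$$
t_1 := \sup\bigl\{t \in [s,t^{*}] : X_{s,t}(x) \geq x \wedge 0\bigr\}.
$$
Since $X_{s,s}(x)=x \geq x \wedge 0$, the set is non-empty and, by continuity, $X_{s,t_1}(x) = x \wedge 0$ while $X_{s,t}(x) \leq x \wedge 0 \leq 0$ for every $t \in [t_1, t^{*}]$. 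Applying the sign observation on $[t_1, t^{*}]$ and integrating \eqref{SDEH} from $t_1$ to $t^{*}$ gives
$$
X_{s,t^{*}}(x) \;\geq\; X_{s,t_1}(x) + H(t^{*}) - H(t_1) \;=\; x \wedge 0 + H(t^{*}) - H(t_1),
$$
and since $s \leq t_1 \leq t^{*} \leq v$ this is at least $x \wedge 0 + \inf_{s \leq t \leq u \leq v}(H(u)-H(t))$, as required.

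The only real point of care is the blow-up time. However, the same sign argument shows that the trajectory cannot escape to $-\infty$ in finite time (negative values are corrected by the drift), so blow-up only occurs at $+\infty$ and is harmless for an infimum from below; if $\beta_s(x) \leq v$ one applies the above reasoning on each $[s,v']$ with $v' < \beta_s(x)$ and passes to the limit $v' \uparrow \beta_s(x)$. Otherwise the argument is entirely pathwise and uses nothing about $H$ beyond continuity, so there is no substantive obstacle to carrying it out.
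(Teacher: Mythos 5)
Your argument is correct and rests on the same core observation as the paper's proof: when $t \geq 0$ and $X \leq 0$ the drift $X(X-t)$ is non-negative, so any descent below $x\wedge 0$ must be financed by the $H$-increment. The difference lies in how this observation is converted into the estimate. The paper decomposes $X_{s,t}(x)$ into the drift integral plus $H(t)-H(s)$ and treats the two infima separately, asserting $\inf_{s\leq t\leq v}\bigl(x+\int_s^t X(u)(X(u)-u)\,\d u\bigr)\geq x\wedge 0$; read literally that intermediate inequality can fail, since $H$ may push $X$ above $0$, where the integrand is negative on $\{0<X(u)<u\}$ and the drift integral alone can drop below $x$. Your localisation to the last crossing $t_1$ of the level $x\wedge 0$ before the minimiser $t^*$ is precisely what makes the sign observation legitimately applicable: on $[t_1,t^*]$ one genuinely has $X\leq x\wedge 0\leq 0$, and integrating from $t_1$ rather than from $s$ is exactly why the double infimum $\inf_{s\leq t\leq u\leq v}(H(u)-H(t))$, rather than $\inf_{s\leq t\leq v}(H(t)-H(s))$, appears in the statement. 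A further small gain is that your dichotomy on the minimum $m$ handles all $x$ at once, so you avoid the separate case $x>0$ via Lemma~\ref{monot}. The truncation $v'\uparrow\beta_s(x)$ to deal with blow-up is a sensible precaution and goes through.
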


\begin{proof}
  Assume that $x \leq 0$ and that $u \in [s,v]$ is such that $X_{s,u}(x)\leq x$. Let $t=t(u)\in [s,u]$ be such that $X_{s,t}(x)=x$ and $X_{s,r}(x)\leq x$ for all $r \in [t,u]$ 
(such a $t$ exists by continuity of $r \mapsto X_{s,r}(x)$ and the fact that $X_{s,s}(x)=x$). Then
\begin{align*}
X_{s,u}(x)&=X_{s,t}(x)+\int_t^u X_{s,r}(x)\big(X_{s,r}(x) -r\big)\,\d r+H(u)-H(t) \geq x +H(u)-H(t)\\&\geq x+ \inf_{s \leq \bar t \leq \bar u \leq v} \big(H(\bar u)-H(\bar t)\big),
\end{align*}
since the integrand is at least 0 since  $X_{s,r}(x)\leq x \leq 0$ for $r \in [t,u]$. This holds true for all $u$ as above and therefore the assertion holds for $x \leq 0$.

The case of $x>0$ follows by monotonicity from Lemma \ref{monot}.
\end{proof}

Proposition~\ref{Xinf} implies that the function
$t \mapsto X_{s,t}(x)$ cannot reach $-\infty$ in finite time. Further,
if $X_{s,t}(x)\geq A$ for some $t \in [s,v]$ and
$A \geq v+ \inf_{s \leq \kappa \leq u \leq v}\big(
H(u)-H(\kappa)\big)$, then
$$
\inf_{t\leq u \leq v}X_{s,u}(x)\geq A + \inf_{s \leq \kappa \leq u \leq v}\big( H(u)-H(\kappa)\big)
$$
by the same argument as was used in Proposition~\ref{Xinf}, since
$y(y-r)\geq 0$ for $y \geq v$ and $r \in [s,v]$. This result implies that
$\lim_{t \uparrow \beta_s(x)}X_{s,t}(x)=\infty$ whenever
$\beta_s(x)<\infty$. The function $x \mapsto \beta_s(x)$ is
non-decreasing for each $s \geq 0$.

We will write $X_{s,t}(x)=\infty$ whenever $t \geq \beta_s(x)$.

\begin{prp}\label{bfinite}
  For any $s \geq 0$, we have $\beta_s(x)<\infty$ for sufficiently
  large $x$.
\end{prp}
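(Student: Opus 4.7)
The idea is that when $X$ is very large compared to $t$, the drift $X(X-t)$ is quadratic in $X$ and therefore beats additive noise, so we can dominate the solution from below by a deterministic ODE of the form $z' = z^2/2$ which explodes in finite time.

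More concretely, I would fix an arbitrary target horizon $T>s$ and bound the noise contribution by the continuous constant
\[
M := \sup_{s \leq u \leq t \leq T}\bigl(H(t)-H(u)\bigr)^{-},
\]
or more simply $M := \sup_{s\leq t \leq T}|H(t)-H(s)|$, which is finite because $H$ is continuous. Introduce the auxiliary process $Y(t) := X_{s,t}(x) - \bigl(H(t)-H(s)\bigr)$, so that, on any interval before $\beta_s(x)$,
\[
Y'(t) = X_{s,t}(x)\bigl(X_{s,t}(x)-t\bigr),
\]
a genuine (non-random) ODE identity in which the noise has been absorbed into the relation $X = Y + (H-H(s))$.

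The key step is a closed region argument: as long as $Y(t) \geq 2T + M$ on $[s,T]$, we have $X_{s,t}(x) \geq Y(t) - M \geq 2T \geq 2t$, hence $X(X-t) \geq X^{2}/2 \geq (Y-M)^{2}/2$. Setting $Z := Y - M$, we therefore obtain the differential inequality $Z'(t) \geq Z(t)^{2}/2$ with $Z(s) = x - M$, and the comparison function $z' = z^{2}/2$ starting from $x-M$ blows up at time $s + 2/(x-M)$. Provided $x$ is chosen large enough that both $x - M \geq 2T$ (so the region assumption is preserved, since $Z$ is increasing while the inequality persists) and $s + 2/(x-M) < T$, the solution $X_{s,t}(x)$ is forced to exit every bounded set before time $T$, so $\beta_{s}(x) \leq T < \infty$.

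The only slightly delicate point is the bookkeeping in the region argument: one must verify that $Z(t) \geq 2T$ is preserved forward in time so that the quadratic lower bound $Z' \geq Z^{2}/2$ actually holds until the comparison ODE explodes. This is immediate because $Z'\geq 0$ whenever $Z\geq 0$, so as soon as $Z(s) = x - M \geq 2T$ the inequality $Z(t) \geq 2T$ is maintained, and the ODE comparison can be applied without interruption up to the blow-up time.
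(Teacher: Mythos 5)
Your proof is correct and follows essentially the same route as the paper's: absorb the noise into a constant (your $M$; the paper's $H_*$), work on a bounded time window, and compare with the exploding ODE $z' = z^2/2$, using the observation that $X(X-t)\geq X^2/2$ once $X$ is large compared with $t$. One small slip in the justification: the claim ``$Z'\geq 0$ whenever $Z\geq 0$'' is not true in general (if $0\leq Z\leq X< t$ then $X(X-t)<0$); what you actually use, and what is correct, is that $Z'\geq Z^2/2>0$ whenever $Z\geq 2T$ and $t\leq T$, so the region $\{Z\geq 2T\}$ is forward invariant starting from $Z(s)=x-M\geq 2T$. With that phrasing repaired, the argument is complete; the paper's version instead fixes $T=s+1$ and derives a contradiction at the first time the solution would touch the comparison trajectory, which is cosmetically different but the same mechanism.
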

\begin{proof}
  To see this, let $H_*$ be a lower bound of $H(t)-H(s)$ for
  $t \in [s,s+1]$ and let $x$ be so large that the solution of
  $Y'=\frac 12 Y^2$ with initial condition $x+H_*-1$ at time $s$ blows
  up before time $s+1$. In addition, assume that $x\geq 3-H_*$. Let
  $X(t):=X_{s ,t}(x)$, $t \geq s$. Then $X(s)> Y(s)$ since
  $H_*\leq 0$. Let $\tau$ be the infimum of all $t \geq s$ such that
  $X(t)=Y(t)$ and assume that $\tau \leq s+1$. For $u\leq \tau$ we
  have $X(u)\geq Y(u)\geq x+H_*-1$ since $Y$ is increasing.  Then, for
  $u \in [s,\tau]$,
\begin{align*}
X(u) & \big(X(u)-1\big)=\frac 12 X^2(u)+\frac 12
       X(u)\big(X(u)-2\big)\geq \frac 12 Y^2(u)\\
  & +\frac 12 X(u)\big(X(u)-2\big)\geq \frac 12 Y^2(u),
\end{align*}
since $X(u)-2\geq Y(u)-2 \geq 0$. Hence
\begin{align*}
  X(\tau) & =x+\int_s^\tau X(u)\big(X(u)-1\big)\,\d u +H(\tau)-H(s)\\
  & > x+H_* -1 +\frac 12 \int_
s^\tau Y^2(u)\d u =Y(\tau),
\end{align*}
which is a contradiction.
\end{proof}

From Proposition~\ref{bfinite} it follows by monotonicity that the set
\begin{equation} \label{I1s}
  I_1(s) := \{ x\in \R \, | \, \beta_s(x)<\infty\}
\end{equation}  
is an interval which is unbounded above and non-empty.

Now we define 

$$
x_{\inf}(s):=\inf\{x \in \R:\,\beta_s(x)<\infty\}\in [-\infty,\infty).
$$

\begin{prp}\label{Rsoln} 
  \[
    x_{\inf}(t) = X_{s,t} ( x_{\inf} (s) )
  \]
  whenever $s>t$ and $x_{\inf}(s)>-\infty$.  
\end{prp}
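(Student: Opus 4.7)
The plan is to use strict monotonicity of the flow (Lemma~\ref{monot}) together with the continuity of $x\mapsto X_{s,t}(x)$ noted immediately after that lemma, and the cocycle identity $X_{t,u}\circ X_{s,t}=X_{s,u}$ for $s\le t\le u$, which in particular gives $\beta_t\bigl(X_{s,t}(x)\bigr)=\beta_s(x)$ whenever $\beta_s(x)>t$. Write $r_s:=x_{\inf}(s)$ and $r_t:=x_{\inf}(t)$; by Proposition~\ref{bfinite} the set $I_1(t)$ defined in \eqref{I1s} is a non-empty interval unbounded above, so $r_t<\infty$. The strategy is to sandwich $X_{s,t}(r_s)$ between $r_t$ and $r_t$ by letting $x\to r_s$ from each side and using continuity.

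For $x<r_s$ one has $\beta_s(x)=\infty$, so $X_{s,t}(x)$ is finite and the cocycle identity yields $\beta_t\bigl(X_{s,t}(x)\bigr)=\infty$; hence $X_{s,t}(x)\notin I_1(t)$ and so $X_{s,t}(x)\le r_t$. Letting $x\uparrow r_s$ and invoking continuity of $X_{s,t}$ at $r_s$ yields $X_{s,t}(r_s)\le r_t$. Conversely, for $x>r_s$ one has $\beta_s(x)<\infty$: if $\beta_s(x)\le t$ then $X_{s,t}(x)=\infty\ge r_t$ trivially, while if $\beta_s(x)>t$ the cocycle identity gives $\beta_t\bigl(X_{s,t}(x)\bigr)=\beta_s(x)<\infty$, so $X_{s,t}(x)\in I_1(t)$ and $X_{s,t}(x)\ge r_t$. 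Letting $x\downarrow r_s$, continuity gives $X_{s,t}(r_s)\ge r_t$, and combining the two bounds produces $X_{s,t}(r_s)=r_t$.

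As a byproduct, since $r_t<\infty$, the equality forces $X_{s,t}(r_s)<\infty$, so $\beta_s(r_s)>t$; as $t>s$ is arbitrary, this actually shows that the critical trajectory is global, i.e., $\beta_s(r_s)=\infty$. The only non-trivial ingredient is the cocycle identity $\beta_t\bigl(X_{s,t}(x)\bigr)=\beta_s(x)$, which follows directly from uniqueness of local solutions of \eqref{SDEH} and is what couples the two blow-up sets $I_1(s)$ and $I_1(t)$; beyond that, the argument is a soft monotonicity-plus-continuity exercise and I anticipate no genuine analytic obstacle. The assumption $r_s>-\infty$ is needed only so that both one-sided limits $x\uparrow r_s$ and $x\downarrow r_s$ are meaningful.
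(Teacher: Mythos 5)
Your proof is correct and rests on the same core ingredients as the paper's argument: the flow/cocycle property relating $\beta_s(x)$ to $\beta_t\bigl(X_{s,t}(x)\bigr)$, monotonicity, and continuity of $x\mapsto X_{s,t}(x)$. The paper proceeds by contradiction, picking an intermediate point via the intermediate value theorem, whereas you sandwich $X_{s,t}(r_s)$ by taking one-sided limits $x\uparrow r_s$ and $x\downarrow r_s$ directly; the substance is the same.
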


\begin{proof}
  Assume that for some $t > s$ we have that
  $ X_{s,t} ( x_{\inf} (s) ) < x_{\inf}(t)$. Consider
  $x \in ( X_{s,t} ( x_{\inf} (s) ), x_{\inf}(t))$. Then by
  monotonicity there exists $y$ such that $x=X_{s,t}(y)$ and
  $y> x_{\inf}(s)$. Hence $X_{s,t}(y)$ blows up in finite time but
  since $x < x_{\inf}(t)$, this is impossible. The argument for the
  case $ X_{s,t} ( x_{\inf} (s) ) > x_{\inf}(t)$ is similar.
\end{proof}  





It is obvious from Proposition~\ref{Rsoln} that
$X_{s,t}(x_{\inf}(s))$ is defined for all $t>s$.    

In particular, the set $I_1(s)$ of (\ref{I1s}) is open; depending on
$H$ it may be all of $\R$ or a proper (non-empty) subset of $\R$.

Now we return to our initial model \eqref{SDE}. A Wiener process $W$
has continuous paths and satisfies $W(0)=0$, so that our general
statements above can be applied for (almost) every
$\omega \in \Omega$.

\begin{thm}\label{1main}
  For every $s\geq 0$ and $x \in \R$ the probability that either
  $\beta_s(x)<\infty$ or $\lim_{t \to \infty}X_{s,t}(x)=0$ equals one,
  i.e., almost surely, either the solution blows up in finite time or
  it converges to 0.
\end{thm}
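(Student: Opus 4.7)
The plan is to show that on the event $\{\beta_s(x)=\infty\}$, the trajectory $X_{s,t}(x)$ converges to $0$ almost surely. The driving mechanism is that for large $t$ the drift $X(X-t)$ strongly attracts $X$ toward $0$: for $0<X\leq t/2$ it is at most $-tX/2$, while for $X<0$ it equals $|X|(t+|X|)\geq t|X|$. So locally near $0$ the SDE behaves like an Ornstein-Uhlenbeck process with mean-reversion rate of order $t$ and constant diffusion $\sigma$, whose stationary standard deviation $\sigma/\sqrt{2t}$ tends to $0$.

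Step 1 (ruling out large values). I would first show that on $\{\beta_s(x)=\infty\}$, for every $M>0$, eventually $X_t\leq M$ almost surely. If $X_t$ reaches the repelling region $[t,\infty)$ at a large time $t$, a deterministic comparison as in the proof of Proposition~\ref{bfinite} shows that blow-up follows with probability bounded below, uniformly in the noise increments on the next unit interval; iterating at integer times via the strong Markov property yields $\beta_s(x)<\infty$ almost surely on this event. If instead $X_t$ stays in $[M, t/2]$ throughout $[n,n+1]$ for large integer $n$, the drift is below $-nM/2$, dominating the unit-time noise displacement of order $\sigma$; the exit probability estimates from the appendix, applied to intervals anchored at $X=M$, give a summable bound on this event.

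Step 2 (convergence inside a bounded strip). Once $X$ is known to lie eventually in $[-M,M]$, I apply the exit probability estimates on nested intervals $[-\epsilon,\epsilon]\subset[-2\epsilon,2\epsilon]$. For $n$ large, if $|X_n|\leq\epsilon$, then the probability that $\sup_{t\in[n,n+1]}|X_t|>2\epsilon$ decays like $\exp(-c\, n\epsilon^2/\sigma^2)$, since on $[-2\epsilon,2\epsilon]$ the drift magnitude is at least of order $n\epsilon$ while diffusion is $\sigma$; this is summable in $n$. An analogous estimate shows that $\P(X_n\in[-M,M],\ |X_{n+1}|>\epsilon)$ is also summable. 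By Borel-Cantelli, for each $\epsilon$ from a countable sequence tending to $0$, almost surely $|X_n|\leq\epsilon$ for all large $n$ and $X$ does not exit $[-2\epsilon,2\epsilon]$ on any unit interval $[n,n+1]$, giving $X_t\to 0$ on $\{\beta_s(x)=\infty\}$.

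The main obstacle will be step 1: quantitatively ruling out a trajectory that hovers near the unstable curve $X\approx t$ without blowing up. The time-inhomogeneity of the drift forces the scale and speed densities in the exit probability estimates to depend delicately on $t$, and the computations on intervals that scale linearly with $t$ will carry the technical weight. Step 2 is conceptually more standard but still relies on the exit estimates being uniform in the starting position inside the strip.
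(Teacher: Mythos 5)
Your plan is a genuinely different route from the paper's, and it has a real gap that you yourself flag. The paper does \emph{not} try to prove, for a fixed $(s,x)$, the almost-sure dichotomy directly. Instead it defines $\kappa(s,x)$ to be the probability of the ``undetermined'' event (neither blow-up nor convergence to $0$), proves the \emph{uniform} estimate $\sup_{x\in\R}\kappa(s,x)\to 0$ as $s\to\infty$ (via Lemmas~\ref{lemma}--\ref{gegennull}, which are close in spirit to your comparison/exit-probability estimates), and then exploits the flow/Markov property in the form
$\kappa(s,x)\le \int_\R \kappa(t,y)\,\P(X_{s,t}(x)\in \d y)\le \sup_y\kappa(t,y)$
for every $t\ge s$, whence $\kappa(s,x)=0$. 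This propagation step is precisely what makes the ``hovering near the diagonal'' problem disappear: there is no need to show pathwise that a trajectory eventually leaves the neighbourhood of the diagonal forever; it is enough that \emph{wherever} the solution is at a large time $t$, the undetermined probability from there is small.

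In your Step 1 this issue surfaces twice. First, your case split (reach $[t,\infty)$; stay in $[M,t/2]$ for a unit interval; otherwise in $[-M,M]$) does not actually cover the strip $(t/2,t)$, which is exactly where the unstable curve lives, so the decomposition is incomplete. Second, even the blow-up iteration is not as stated: ``blow-up with probability bounded below, iterate at integer times'' requires that conditionally on not blowing up the process is again in the repelling region at the next integer time, which the strong Markov property does not give you. The paper's Lemma~\ref{lemma} gives the stronger statement $B(s,s+1)\to 1$, so that a \emph{single} visit to level $t+1$ at a large time already yields blow-up with probability $1-o(1)$; but converting that into an almost-sure statement on the event of infinitely many visits still requires tracking random return times, and this is the nontrivial bookkeeping that the propagation identity replaces. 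Your Step 2 (nested-interval exit estimates plus Borel--Cantelli inside $[-M,M]$) is essentially sound and parallels Lemma~\ref{gegennull}, but it only kicks in once Step 1 is established, so the argument as proposed is not complete without a substitute for the paper's $\kappa$-propagation trick.
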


\begin{rem}\label{theremark}
  Theorem \ref{1main} implies that for each $s \geq 0$ and $x \in \R$
  the set $\Omega_{s,x}\subset \Omega$ consisting of those
  $\omega \in \Omega$ for which the solution starting at $(s,x)$
  neither blows up nor converges to zero has measure 0. For $s\geq 0$
  denote the union of all $\Omega_{s,x},\,x \in \R$ by $\Omega_s$.
  Since this is an uncountable union it does not follow that
  $\Omega_s$ has measure 0. Indeed we will see in the following that
  $\P(\Omega_s)>0$ for every $s \geq 0$. In order to obtain a better
  understanding of the set $\Omega_s$ we will use the notation
  $$
  I_1(s,\omega):=\{x \in \R\, |\beta_s(x)<\infty\}
  $$
  corresponding to a realisation $\omega$, consistent with
  (\ref{I1s}). We saw that $I_1(s,\omega)$ is a non-empty open interval
  which is unbounded from above.  Similarly, we set
  \begin{equation}\label{I2s}
    I_2(s, \omega) := \{ x \in \R \, | \,  X_{s,t}(x) \rightarrow  0
    \hbox{ as } t \rightarrow \infty\}.
  \end{equation}
  Again, by monotonicity of solutions, $I_2(s,\omega)$ is a (possibly
  empty) interval lying below $I_1(s,\omega)$. Using Theorem
  \ref{1main} and a Fubini-type argument we will show that
  $I_2(s,\omega)$ is unbounded below whenever $I_2(s,\omega)$ is
  nonempty, that $I_1(s,\omega)$ is unbounded below whenever
  $I_2(s,\omega)$ is empty and that the complement of the (disjoint)
  union of these intervals can contain at most one point, almost
  surely.

  Let
  $$
\zeta(s,\omega):=\inf I_2(s,\omega) \mbox{ if } I_2(s,\omega)\neq \emptyset
$$
and $\zeta(s,\omega):=-\infty$ otherwise. If
$P\big(\zeta(s,\omega)>-\infty\big)>0$, then, by monotone convergence,
there exists some $x \in \R$ such that
$\P\big(\zeta(s,\omega)>x\big)>0$. On the set
$\{\omega: \zeta(s,\omega)>x\}$ the number $x$ lies below the set
$I_2(s,\omega)$ and, a fortiori, also below $I_1(s,\omega)$, so $x$ is
not in the union of the two intervals which contradicts Theorem
\ref{1main}, so $\zeta(s,\omega)=-\infty$ almost surely, so
$I_2(s,\omega)$ is unbounded below whenever $I_2(s,\omega)$ is
nonempty.

Almost exactly the same argument shows that $\inf I_1(s,\omega)=-\infty$ on the set where   $I_2(s,\omega)$ is empty. The fact that, almost surely,  the complement of the union of $I_1(s,\omega)$ and  $I_2(s,\omega)$
contains at most one point is clear when $I_2(s,\omega)$ is empty (and hence $I_1(s,\omega)=\R$). Otherwise, assuming that $\sup I_2(s,\omega)<\inf I_1(s,\omega)$ with strictly positive probability, by Fubini's theorem, there must
exist some (deterministic) $x \in \R$ for which $\P\big(x \in \big(\sup I_2(s,\omega),\inf I_1(s,\omega)\big)>0$ so, in particular, $x$ is not contained in the union of $I_1(s,\omega)$ and $I_2(s,\omega)$ which again contradicts Theorem \ref{1main}.

  We set
  \[
    \RR(s,\omega)= x_{\inf}(s)
  \]  
  for a realisation $\omega \in \Omega$, i.e. it   is the
  infimum of $I_1(s,\omega)$ (which necessarily coincides with the
  supremum of $I_2(s,\omega)$). Note that $\RR(s,\omega)=-\infty$ iff
  $I_2(s,\omega)$ is empty iff $I_1(s,\omega)=\R$. We know that
  $\RR(s,\omega)$ does not belong to $I_1(s,\omega)$ but at the
  moment, it is unclear if $\RR(s,\omega)$ belongs to
  $I_2(s,\omega)$. We will see later that it almost surely
  doesn't. Therefore (almost surely), $\omega \in \Omega_s$ iff
  $\RR(s,\omega)>-\infty$ and $\omega \in \Omega_{s,x}$ iff
  $\RR(s,\omega)=x$. In particular,
  $\P(\Omega_s)= \P(\RR(s,\omega)>-\infty)$ and we will see in Theorem
  \ref{Theorem4} that this quantity converges to 1 as $s\to \infty$.
\end{rem}


We will denote the blow-up probability of the solution starting at $(s,x)$ by $B(s,x)$, i.e.
$$
B(s,x):= \P\big(\beta_s(x)<\infty\big).
$$
Clearly, the function $x \mapsto B(s,x)$ is non-decreasing and converges to 1 as $x \to \infty$ for every $s \ge 0$ (by the general properties stated above).  
We have the following result concerning the asymptotics of $B(s,x)$ as $s \to \infty$.
\begin{thm}\label{Theorem2}
  For every $z \in \R$ we have
  $$
\lim_{s \to \infty}B\Big(s,s+z\frac \sigma{\sqrt{2s}}\Big)=\Phi(z):=\frac 1{\sqrt{2\pi}}\int_{-\infty}^z\exp\Big\{ - \frac {y^2}2\Big\}\,\d y. 
  $$
\end{thm}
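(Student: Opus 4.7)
The plan is to linearise around the critical line $x=t$ and identify the answer as a Gaussian tail via the Mills-ratio asymptotic. Set $Y(t):=X(t)-t$, which satisfies $\d Y=(tY+Y^2-1)\,\d t+\sigma\,\d W$ with $Y(s)=z\sigma/\sqrt{2s}$. By Theorem~\ref{1main} the event $\{\beta_s(x)<\infty\}$ agrees a.s.\ with $\{Y(t)\to+\infty\}$, while on its complement $X_{s,t}(x)\to 0$. For this initial scale, $|Y(s)|=O(\sigma/\sqrt s)$, so the linear drift $tY$ has size $\sigma\sqrt s$ while $|Y^2-1|=O(1)$; near $t=s$ the equation is therefore well approximated by the Gaussian linearisation $\d Z=tZ\,\d t+\sigma\,\d W$ with $Z(s)=Y(s)$.

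The linear equation is explicitly solvable: the integrating factor gives
\[
  e^{-t^2/2}Z(t)=e^{-s^2/2}Y(s)+\sigma\int_s^t e^{-u^2/2}\,\d W(u),
\]
an $L^2$-bounded martingale converging a.s.\ as $t\to\infty$ to a Gaussian $G_s$ with mean $e^{-s^2/2}\cdot z\sigma/\sqrt{2s}$ and variance $\sigma^2\int_s^\infty e^{-u^2}\,\d u$. The Mills-ratio asymptotic $\int_s^\infty e^{-u^2}\,\d u\sim e^{-s^2}/(2s)$ yields mean and standard deviation both of order $\sigma e^{-s^2/2}/\sqrt{2s}$ with ratio exactly $z$ in the limit, so $\P(G_s>0)\to\Phi(z)$; moreover $\{G_s>0\}=\{Z(t)\to+\infty\}$ a.s., supplying the candidate limit.

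The transfer to the nonlinear equation proceeds in two halves. Since $Y^2\ge 0$, pathwise comparison with the same noise gives $Y\ge Y_-$, where $\d Y_-=(tY_--1)\,\d t+\sigma\,\d W$; the added drift $-1$ produces only an $O(e^{-s^2/2}/s)$ shift in the limiting Gaussian mean, negligible against its standard deviation, so $\P(Y_-\to+\infty)\to\Phi(z)$ and hence $\liminf_{s\to\infty}B(s,s+z\sigma/\sqrt{2s})\ge\Phi(z)$. For the matching upper bound, fix $M$ large and stop at $\tau_M:=\inf\{t\ge s:|Y(t)|\ge M\}$: Girsanov with drift correction $\sigma^{-1}(Y^2-1)$ couples the laws of $Y$ and $Z$ up to $\tau_M$, and the log-density's quadratic variation, bounded by $\sigma^{-2}(M^2+1)^2(\tau_M-s)$, tends to $0$ in probability since $\tau_M-s=O(\log s/s)$ for the linear process. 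Consequently the exit-side distributions of $Y$ and $Z$ from $[-M,M]$ agree in the limit $s\to\infty$. For $Z$, once $|Z|$ is large the linear drift dominates and $Z$ escapes to infinity in that direction, so $\P(Z\text{ exits }[-M,M]\text{ at }+M)\to\P(Z\to+\infty)$ as $M\to\infty$; combining with the Gaussian computation yields $\Phi(z)$. Finally, once $Y$ exits at $+M$ at some time $s'$ with $X(s')\ge s'+M$, Proposition~\ref{bfinite} restarted at $s'$ delivers blow-up with probability tending to $1$ uniformly in $s$ large as $M\to\infty$, and exit at $-M$ puts $X$ safely below the attracting region and is ruled out as leading to blow-up via the appendix exit-probability estimates. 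The main obstacle is this last quantitative passage from the rescaled exit location to actual blow-up versus trapping near $0$, which has to be carried out uniformly in $s$ without circular reliance on the theorem being proved.
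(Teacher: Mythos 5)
Your identification of the Gaussian limit is correct, and the route you take to get it is genuinely different from the paper's: instead of freezing the drift and using the scale function of a time-homogeneous comparison diffusion $Z_A$ solving $\d Z_A=(sZ_A+A)\,\d t+\sigma\,\d W$ (Lemma \ref{lem2}), you keep the full time-dependent linear drift $tZ$, solve explicitly by integrating factor, read off the limiting Gaussian $G_s$ via the Mills ratio, and then transfer to $Y$ by one-sided comparison for the lower bound and a Girsanov coupling with small log-density variance for the upper bound. That computes the limit of the probability that $Y$ exits a fixed window around the diagonal on the upper side, and is a legitimate alternative to the paper's $p^+(s,z\sigma/\sqrt{2s})\to\Phi(z)$.

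However, the step you flag at the end is not a small loose end; it is where the paper does most of its work, and what you offer to fill it does not suffice. First, to pass from exit at $+M$ to blow-up you invoke Proposition \ref{bfinite}, but that proposition is purely pathwise: for each realization of $H$ it asserts $\beta_{s'}(x)<\infty$ for $x$ above an $\omega$- and $s'$-dependent threshold, giving no quantitative control on $\P(\beta_{s'}(s'+M)<\infty)$ uniformly in $s'$. The paper supplies exactly this control in Lemma \ref{lemma}, via Feller's test and a scale-function computation for the comparison process $\bar Y$; you would need to prove an analogue. Second, to rule out blow-up after exit at $-M$ you appeal vaguely to the ``appendix exit-probability estimates,'' but the appendix is generic machinery and says nothing on its own about whether a trajectory that drops below the diagonal may wander back up and eventually explode. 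The paper handles this with Lemmas \ref{einfach}, \ref{schwieriger} and \ref{gegennull} (hitting 0 from below; hitting 0 before $s$ from $(0,s-1]$; staying near 0 forever once at 0), and then needs the Markov-property argument around \eqref{kappe} to upgrade $B(s,x)+C(s,x)\leq1$ to equality. Without these companion lemmas your bound is only one-sided, and the two-sided statement $\lim_{s\to\infty}B(s,s+z\sigma/\sqrt{2s})=\Phi(z)$ does not yet follow.
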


Note that the convergence is even uniform in $z$ since both sides are non-decreasing, take values in $[0,1]$, and $\Phi$ is continuous. In what follows, we will call the subset $\{(s,s): s\geq 0\}$ the {\em diagonal}. Theorem \ref{Theorem2} (together with Theorem \ref{1main}) says that, for large $s$,
solutions starting slightly above the diagonal will blow-up with high probability while solutions starting slightly below the diagonal will converge to 0 with high probability. 

%
%
%
The theorem above tells us, in particular, that $B(s,0)$ converges
to 0 as $s \to \infty$. This does not mean that $B(s,x)$ converges to
0 as $x \to -\infty$ for fixed $s$. In fact it doesn't. Let
$B(s,-\infty)$ be the limit of $B(s,x)$ as $x \to -\infty$ which exists since $x \mapsto B(s,x)$ is non-decreasing.
\begin{thm}\label{Theorem3}
  $$
B(s,-\infty)>0
$$
for every $s \geq 0$.
\end{thm}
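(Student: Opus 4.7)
By Remark~\ref{theremark} we have $B(s,-\infty) = \P(\RR(s) = -\infty)$, which is the probability that \emph{every} solution starting at $(s,x)$ blows up in finite time. By the monotonicity in the initial condition (Lemma~\ref{monot}), this reduces to exhibiting a positive-probability event on which the pointwise extremal trajectory $X^*_{s,t} := \lim_{x \to -\infty} X_{s,t}(x)$ blows up in finite time, since on such an event $X_{s,t}(x) \geq X^*_{s,t}$ forces the same for every $x \in \R$.

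The plan is to use a Riccati-type substitution. Writing $X = -v/u$ with $u(s) = 1$, $v(s) = -x$, the SDE \eqref{SDE} is equivalent to the coupled It\^o system
\begin{align*}
  du &= v\, dt,\\
  dv &= -t\, v\, dt - \sigma u\, dW,
\end{align*}
and $X$ blows up exactly when $u$ returns to $0$ (with $v \neq 0$). Rescaling $(u,v)$ by $1/|x|$ and letting $x \to -\infty$ replaces the degenerate initial condition $X^*(s^+) = -\infty$ by the regular condition $(u(s), v(s)) = (0, 1)$ without changing the zero set of $u$, so blow-up of $X^*$ is equivalent to $u$ having a zero in $(s, \infty)$.

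To produce such a zero on a positive-probability event I would compare with a deterministic reference path. For the smooth path $W_0(t) = W(s) + c(t-s)/\sigma$ with $c>0$, a pathwise integration-by-parts (Doss--Sussmann type) argument makes the map $W \mapsto u$ continuous from $C([s,T])$ (sup norm) to $C([s,T])$, and for $W = W_0$ the system collapses to the Hermite-type ODE $u'' + t u' + c u = 0$ with $u(s) = 0$, $u'(s) = 1$. A Pr\"ufer transformation $(u, u'/\sqrt c) = r(\cos\psi, -\sin\psi)$ with $\psi(s) = -\pi/2$ yields
\[
  \dot\psi = \sqrt c - \tfrac{t}{2}\sin 2\psi, \qquad \frac{\dot r}{r} = -t\sin^2\psi \leq 0,
\]
so $r$ stays strictly positive and $\dot\psi \geq \sqrt c - t/2$ on $[s, 2\sqrt c]$. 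Hence
\[
  \psi(2\sqrt c) - \psi(s) \;\geq\; \int_s^{2\sqrt c}\!\bigl(\sqrt c - r/2\bigr)\, dr \;=\; (\sqrt c - s/2)^2,
\]
which exceeds $\pi$ whenever $c \geq (s/2 + \sqrt\pi)^2$. For such $c$ the angle $\psi$ crosses $\pi/2$ before $t = 2\sqrt c$, forcing a transversal zero of $u$ at some $t^* \in (s, 2\sqrt c)$.

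By the support theorem for Wiener measure, every sup-norm neighborhood of $W_0$ on $[s, 2\sqrt c + 1]$ has positive probability; by the pathwise continuous dependence just invoked, the transversal zero of $u$ persists on such a neighborhood, so $X^*$ blows up in finite time on an event of positive probability and $B(s,-\infty) > 0$. The main technical obstacle is justifying the continuous dependence rigorously for the coupled system above (in effect, a pathwise Wong--Zakai/Doss--Sussmann statement for the additive-noise SDE) and legitimizing the passage $x \to -\infty$ in the $(u,v)$-coordinates; the oscillation argument for the linear ODE $u'' + tu' + cu = 0$ is then completely classical.
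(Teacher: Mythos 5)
Your argument is correct in outline, but it takes a genuinely different route from the paper's.

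The paper's proof avoids the extremal trajectory $X^*=\lim_{x\to-\infty}X_{s,t}(x)$ altogether. It argues in two probabilistic steps, both using the scale/speed machinery already developed in the appendix: (i) by comparison with $\d\bar X=\bar X^2\,\d t+\sigma\,\d W$, the expected time for a solution starting at $(s,x)$ with $x<0$ to reach $0$ is bounded by a finite constant $D$ \emph{uniformly} in $x$ and $s$, so by Markov's inequality the trajectory reaches $[0,\infty)$ by time $s+2D$ with probability at least $1/2$, no matter how negative $x$ is; and (ii) $B(u,0)$ is bounded away from $0$ on compact $u$-intervals, shown by forcing the path from $(s,0)$ up to the diagonal in one unit of time with a Gaussian estimate and then invoking Theorem~\ref{Theorem2} at $(s+1,s+1)$. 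Combining the two, $B(s,x)\geq \frac 12 \inf_{u\in[s,s+2D]}B(u,0)>0$ uniformly in $x<0$, and letting $x\to-\infty$ gives the result. This fits seamlessly into the framework used throughout the paper and requires no new pathwise machinery.

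Your approach, by contrast, linearizes through the Riccati substitution $X=-v/u$, rescales the initial condition so that $x\to-\infty$ becomes the regular datum $(u,v)(s)=(0,1)$, reduces blow-up to the existence of a zero of $u$, and then combines Prüfer oscillation for the Hermite-type ODE $u''+tu'+cu=0$ with the Wiener support theorem. The Prüfer computation is correct (your angle bound $\psi(2\sqrt c)-\psi(s)\geq(\sqrt c-s/2)^2$ checks out, and $r>0$ persists since $\dot r/r=-t\sin^2\psi$ is locally bounded), and the support theorem step is standard once sup-norm continuity of $W\mapsto u$ is available. The two gaps you flag are real but closable: because $\d u=v\,\d t$ gives $u$ finite variation, $\int u\,\d W$ can be rewritten pathwise by integration by parts, turning the $(u,v)$ system into a linear Volterra integral equation whose kernel depends on $W$ only through sup-norm-continuous expressions, so Gronwall gives the required continuity and continuous dependence on $(u(s),v(s))$; and $X^*$ is then identified with $-v/u$ from $(0,1)$ by linearity and homogeneity. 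What your route buys is a clean structural picture of the extremal trajectory — something the paper deliberately does not try to construct — plus a deterministic oscillation argument in place of scale-function estimates; what the paper's route buys is that everything stays within the one-dimensional-diffusion toolkit already needed for the other theorems, with no Wong--Zakai/support-theorem overhead. Both are valid; yours requires more infrastructure to make rigorous but is arguably more illuminating about \emph{why} every trajectory blows up simultaneously on the relevant event.
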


Finally, we investigate the random borderline $\RR$ between blow-up and convergence to 0. 

Recall that $\scr{R}(s)\in [-\infty,\infty)$ for all $s \geq 0$ and
that $\scr{R}(s)=-\infty$ implies $\scr{R}(r)=-\infty$ for every
$r<s$. Further, $X_{s,t}(\scr{R}(s))=\scr{R}(t)$ whenever $t\geq s$
and $\scr{R}(s)>-\infty$, so $\scr{R}(t)$ is a solution of \eqref{SDE}
on the interval where it is larger than $-\infty$. Theorem
\ref{Theorem2} implies that, for each $\varepsilon>0$ and
$\alpha \in (0,1/2)$,
$\lim_{s \to \infty}\P\big( s^\alpha|\scr{R}(s)-s|\geq
\varepsilon\big)=0$ but the following result is stronger.
\begin{thm}\label{Theorem4}
  For each $\alpha \in (0,1/2)$,
  $$
\P\big(\lim_{s \to \infty} s^\alpha|\scr{R}(s)-s|=0\big)=1.
  $$
  \end{thm}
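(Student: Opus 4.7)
The plan is to combine a quantitative strengthening of Theorem \ref{Theorem2}, via Borel-Cantelli along a suitably chosen grid, with an interpolation argument that uses the SDE that $\scr{R}$ satisfies wherever it is finite. Set $Y(t):=\scr{R}(t)-t$; on the random interval on which $\scr{R}>-\infty$, the process $Y$ satisfies
\begin{equation*}
\d Y(t)=\bigl(Y(t)^2+tY(t)-1\bigr)\,\d t+\sigma\,\d W(t),
\end{equation*}
and the goal becomes $s^\alpha|Y(s)|\to 0$ almost surely (realisations with $\scr{R}(s)=-\infty$ are absorbed into the tail bound below, as they contribute $|\scr{R}(s)-s|=\infty$).

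The first step is a non-asymptotic Gaussian-type tail estimate: for each $\delta>0$ there exist $C,c>0$ such that
\begin{equation*}
\P\bigl(|\scr{R}(s)-s|\geq \delta s^{-\alpha}\bigr)\leq C\exp\bigl(-c\delta^2 s^{1-2\alpha}\bigr)
\end{equation*}
for all sufficiently large $s$. In the standardised variables of Theorem \ref{Theorem2}, the scale $\delta s^{-\alpha}$ corresponds to $z_s=\delta\sqrt{2}\,s^{1/2-\alpha}/\sigma\to\infty$, which is outside the regime where Theorem \ref{Theorem2} as stated is informative. However, the proof of Theorem \ref{Theorem2} rests on exit-time estimates for time-dependent diffusions (assembled in the Appendix), and those estimates can be re-run with the deviation parameter taken as $z_s$ rather than fixed, producing the required exponential decay. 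I expect this to be the main technical obstacle.

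Next, take a grid $t_k$ with spacings $t_{k+1}-t_k=c_0/t_k$ for a small constant $c_0>0$, so that $t_k\asymp\sqrt{k}$. The tail estimate then yields
\begin{equation*}
\sum_k\P\bigl(|Y(t_k)|\geq\delta t_k^{-\alpha}\bigr)\leq C\sum_k\exp\bigl(-c' k^{(1-2\alpha)/2}\bigr)<\infty,
\end{equation*}
and Borel-Cantelli gives $|Y(t_k)|\leq\delta t_k^{-\alpha}$ for all but finitely many $k$, almost surely.

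Finally, I would pass from the grid to every $s$ by a first-exit bootstrap on each interval $[t_k,t_{k+1}]$. As long as $|Y(u)|\leq 2\delta u^{-\alpha}$ on such an interval, the drift $Y^2+uY-1$ is dominated by $2\delta u^{1-\alpha}+O(1)$, whose integral over a window of length $c_0/t_k$ is $O(c_0\delta t_k^{-\alpha})$, negligible relative to $\delta t_k^{-\alpha}$ once $c_0$ is small. The Brownian oscillation over the same window has scale $\sigma\sqrt{c_0/t_k}=o(t_k^{-\alpha})$ since $\alpha<1/2$, and a Gaussian maximal inequality bounds each oscillation-exceedance event with summable probability. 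A stopping-time argument therefore shows that the first exit from $\{|Y|\leq 2\delta t^{-\alpha}\}$ within $[t_k,t_{k+1}]$ lies in a summable-probability event, and a second Borel-Cantelli step yields $|Y(t)|\leq 2\delta t^{-\alpha}$ for all sufficiently large $t$, a.s. Letting $\delta$ run through a null sequence then gives $s^\alpha|Y(s)|\to 0$ a.s., as claimed.
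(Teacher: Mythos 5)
Your overall architecture — a quantitative Gaussian-type tail estimate at scale $s^{-\alpha}$, Borel--Cantelli along a grid, and a second Borel--Cantelli step to interpolate between grid points — is exactly the shape of the paper's proof, so the plan is sound. The tail estimate you flag as the main obstacle is in fact exactly what Lemma~\ref{lemma} (blow-up probability for starting point $s+s^{-\alpha}$) and Corollary~\ref{coneu} (blow-up probability for starting point $s-s^{-\alpha}$) supply, each giving $\exp\{-\sigma^{-2}s^{1-2\alpha}(1+o(1))\}$, so the ``one-sided large deviations'' version of the exit estimate the paper has already carried out is all that is needed; the free parameter $\delta$ in your bound is not required, since choosing an intermediate exponent $\alpha_0\in(\alpha,1/2)$ and using $s^{-\alpha_0}=o(s^{-\alpha})$ achieves the same thing with the $\delta=1$ bound.

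Where you genuinely diverge is the interpolation step, and this is the interesting comparison. You take a fine grid $t_k\asymp\sqrt{k}$ with spacing $c_0/t_k$, so that over a single window the destabilising drift term $tY$ contributes at most $O(c_0\,\delta\,t_k^{-\alpha})$ and the Brownian oscillation is $O(\sqrt{c_0/t_k})=o(t_k^{-\alpha})$ (here $\alpha<1/2$ is essential); a second Borel--Cantelli over the window-escape events then closes the argument. The paper instead uses the integer grid with \emph{unit} spacing and runs the solution \emph{backwards in time} from the final condition $X_n=n+n^{-\alpha_0}$: under time reversal the drift near the diagonal becomes strongly mean-reverting with rate $\approx n$, so the reversed process is dominated by an Ornstein--Uhlenbeck process over all of $[0,1]$, and the uniform moment bound of Lemma~\ref{ESVlemma} gives a summable tail directly. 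Your forward-time/short-window argument avoids the time reversal at the cost of tracking a $k$-dependent window length and needing $c_0$ small enough that drift accumulation stays below $\delta t_k^{-\alpha}/2$; the paper's backward-time/unit-window argument is a little slicker because the mean reversion does the damping for free and the comparison process is an explicit OU. Both routes are valid. One small point to make explicit in your write-up: the escape event on $[t_k,t_{k+1}]$ should be intersected with the grid event $\{|Y(t_k)|\le\delta t_k^{-\alpha}\}$ and then bounded via the Markov property (the window increment of $W$ is independent of $\F_{t_k}$), so that you are genuinely summing unconditional probabilities in Borel--Cantelli; as stated this is implicit but should be spelled out.
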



  We will prove the four theorems in the next  three sections.

  In the deterministic case, it follows from the fact that the initial
  value of the repelling river at time 0 is positive, together with
  the fact that the derivative $x'$ in (\ref{deter}) is zero on the
  diagonal $x=t$ that the function $\scr{R}$ lies above the diagonal,
  i.e. $\scr{R}(s)>s$ for all $s \geq 0$. This is not true in the
  stochastic case.
  \begin{prp}\label{osc}
    For each $c\geq 0$ there exist increasing sequences of random times $\rho_n$ and $\kappa_n$ such that, almost surely, $\lim_{n \to \infty}\rho_n=\lim_{n \to \infty}\kappa_n=\infty$ and
    $\scr{R}(\rho_n)>\rho_n+\frac c{\sqrt{\rho_n}}$ and
    $\scr{R}(\kappa_n)<\kappa_n - \frac c{\sqrt{\kappa_n}} $ for all $n \in \N$. 
   \end{prp}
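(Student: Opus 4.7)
The plan is to derive both oscillation statements from Theorem~\ref{Theorem2} combined with a Kolmogorov-type zero-one law for the driving Brownian motion. I describe the argument for the sequence $\rho_n$; the construction of $\kappa_n$ is entirely analogous, replacing the upper-tail limit used below by the lower-tail limit $\P(\scr{R}(s) < s - c/\sqrt{s}) \to \Phi(-c\sqrt{2}/\sigma) > 0$ extracted from Theorem~\ref{Theorem2}.

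First I would record the key measurability observation that, for each fixed $s \ge 0$, the random variable $\scr{R}(s) = x_{\inf}(s)$ depends only on the post-$s$ Brownian increments $(W(t)-W(s) : t \ge s)$. Indeed, $\scr{R}(s)$ is defined as the infimum of $x$ for which the forward solution of \eqref{SDE} starting at $(s,x)$ blows up, and this forward evolution is driven exclusively by $W(t)-W(s)$ for $t \ge s$. Consequently the event
\[
A_s := \{\scr{R}(s) > s + c/\sqrt{s}\}
\]
is independent of $\sigma(W(u) : u \le s)$.

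Next, Theorem~\ref{Theorem2} supplies a positive asymptotic lower bound for $\P(A_s)$. The definition of $\scr{R}$ together with the monotonicity of Lemma~\ref{monot} gives $\{\beta_s(x) < \infty\} = \{x > \scr{R}(s)\}$, hence $\P(\scr{R}(s) \ge x) = 1 - B(s,x)$. Picking any $c' > c$ and setting $x = s + c'/\sqrt{s} = s + z\sigma/\sqrt{2s}$ with $z = c'\sqrt{2}/\sigma$ then yields
\[
\P(A_s) \ge \P\big(\scr{R}(s) \ge s + c'/\sqrt{s}\big) = 1 - B(s, s + c'/\sqrt{s}) \;\longrightarrow\; 1 - \Phi(c'\sqrt{2}/\sigma) =: p_+ > 0.
\]
(Working with $c'>c$ sidesteps the question of whether the distribution of $\scr{R}(s)$ has an atom at $s+c/\sqrt{s}$.)

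Finally, fix any deterministic sequence $s_n \to \infty$ and set $E := \limsup_n A_{s_n}$. The reverse Fatou lemma, valid because the indicators of $A_{s_n}$ are uniformly bounded by $1$, gives $\P(E) \ge \limsup_n \P(A_{s_n}) \ge p_+ > 0$. On the other hand, by the measurability observation each $A_{s_n}$ lies in $\sigma(W(t)-W(T):t\ge T)$ as soon as $s_n \ge T$; hence $E$ is independent of $\sigma(W(u):u \le T)$ for every $T \ge 0$, and therefore of $\sigma(W(u):u\ge 0)\supseteq E$, forcing $\P(E) \in \{0,1\}$. Combined with the lower bound this yields $\P(E)=1$, and enumerating the indices $n$ with $\omega \in A_{s_n}$ produces the desired a.s.\ increasing sequence $\rho_n \to \infty$. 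The main obstacle is the opening measurability step; once that is pinned down, the rest is a routine combination of Theorem~\ref{Theorem2}, reverse Fatou, and Kolmogorov's 0-1 law.
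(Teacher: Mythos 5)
Your proof is correct and takes essentially the same route as the paper: show the relevant event is a tail event of the driving Brownian motion, apply a $0$--$1$ law, and use Theorem~\ref{Theorem2} to rule out the trivial probability. The only difference is cosmetic---you work with the ``infinitely often'' event $\limsup_n A_{s_n}$ over a discrete grid and show its probability is $1$, while the paper works with the complementary ``eventually'' events $A$, $B$ and shows their probability is $0$; your $c'>c$ device to convert the closed-tail bound from Theorem~\ref{Theorem2} into the required strict inequality (avoiding a possible atom in the law of $\scr{R}(s)$) is a careful touch.
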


   \begin{proof}
     The events
     \begin{align*}
       A&:=\{\omega:\exists s_0\geq 0,\,\forall s \geq s_0:\, \scr{R}(s)\geq s-\frac c{\sqrt{s}}\},\\
       B&:=\{\omega:\exists s_0\geq 0,\,\forall s \geq s_0:\, \scr{R}(s)\leq s+\frac c{\sqrt{s}}\},
     \end{align*}
     are tail events of the driving Brownian motion $W$, i.e.~$A$ and $B$ are contained in the tail-$\sigma$ algebra $\scr{T}:=\bigcap_{s\geq 0}\sigma\big(W(t)-W(s),\,t \geq s\big)$.
     Kolmogorov's 0-1 law (or Blumenthal's 0-1 law \cite[Theorem 2.7.17]{KS} and time inversion of Brownian motion \cite[Lemma 2.9.4(ii)]{KS}) state that $\scr{T}$ is trivial, so $A$ and $B$ have either probability 0 or 1. Further,
     \begin{align*}
       \P(A)&=\lim_{s_0\to \infty}\P\Big(\RR(s)\geq s-\frac c{\sqrt{s}}\,\forall s\geq s_0 \Big)\leq\liminf_{s_0\to \infty} \P\Big(\RR(s_0)\geq s_0-\frac c{\sqrt{s_0}}\Big)\\
       &=\liminf_{s_0\to \infty}\Big(1-B\Big(s_0,s_0-\frac c{\sqrt{s_0}}\Big)\Big)<1,
     \end{align*}
     where the final inequality follows from Theorem \ref{Theorem2}. Analogously, $P(B)<1$, so 
     both events have probability 0 and the statement of the proposition follows.
   \end{proof}

   \begin{rem}\label{inft}
     Proposition \ref{osc} complements the statement of Theorem \ref{Theorem4}: we have, almost surely,
     $$
\limsup_{s \to \infty} s^{1/2}\big(\RR(s)-s\big)=\infty \mbox{ and } \liminf_{s \to \infty} s^{1/2}\big(\RR(s)-s\big)=-\infty. 
     $$
     \end{rem}
   
   Finally, we provide an explicit asymptotic representation of the unstable river $\scr{R}$ by linearizing around the diagonal.

   \begin{cor}\label{Rbar}
Let $Z_1$ be the unique solution of the SDE
     $$
\d z= \big(tz-1\big)\,\d t+\sigma\,\d W(t),\;t \geq 0,
$$
with random initial condition $Z_1(0)$ chosen such that the second moment of $Z_1(t)$ converges to 0 as $t\to \infty$,
i.e.
$$
Z_1(0)=\int_0^\infty \exp\Big\{ -\frac{u^2}2        \Big\}\,\d u -\sigma \int_0^\infty\exp\Big\{-\frac{u^2}2 \Big\}\,\d W(u), 
$$
so
$$
Z_1(t)=\exp\Big\{\frac{t^2}2 \Big\}\Big(\int_t^\infty\exp\Big\{-\frac{u^2}2 \Big\}\,\d u     -\sigma  \int_t^\infty\exp\Big\{-\frac{u^2}2 \Big\}\,\d W(u)\Big).
$$
Define $Z_0(t)\equiv 0$ and, recursively,
$$
Z_{n+1}(t):=Z_n(t)- \exp\Big\{\frac{t^2}2 \Big\}\Big(\int_t^\infty\exp\Big\{-\frac{u^2}2 \Big\}\big(Z_n^2(u)-Z_{n-1}^2(u)\big)\,\d u\Big)      ,\; n\geq 1,
$$
and $\RR_n(t):=t+Z_n(t)$, $n \in \N_0$, $t\geq 0$. Then, for each $\varepsilon>0$, $n \in \N$,
\begin{equation}\label{star}
\big|\RR_n(t)-\RR(t)\big|=o(t^{-\frac 32 n -\frac 12 +\varepsilon}),\;t \to \infty,
\end{equation}
and there exists some $T=T(\omega)\in (0,\infty)$    such that $\RR_n$ converges to $\RR$ uniformly on $[T,\infty)$.
%
\end{cor}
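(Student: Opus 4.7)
Set $Z(t):=\RR(t)-t$. From \eqref{SDE} one computes
\[
  \d Z(t) = \big(tZ(t)+Z^2(t)-1\big)\,\d t + \sigma\,\d W(t),
\]
and Theorem \ref{Theorem4} supplies the crude a.s.\ bound $|Z(t)|=o(t^{-\alpha})$ for every $\alpha\in(0,1/2)$ (in particular $Z$ is eventually finite and decays faster than any prescribed sub-square-root rate). The plan is to apply the integrating factor $\exp\{-t^2/2\}$, turn the SDE into an integral identity that isolates $Z_1$, and then iterate it to peel off the higher-order corrections $Z_n$.

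A short It\^o calculation gives $\d\big(Z(t)\exp\{-t^2/2\}\big) = \exp\{-t^2/2\}\big((Z^2(t)-1)\,\d t + \sigma\,\d W(t)\big)$. Integrating from $t$ to $T$ and letting $T\to\infty$, the boundary term $Z(T)\exp\{-T^2/2\}$ vanishes because $|Z(T)|$ is eventually dominated by any positive power of $T$. This yields
\[
  Z(t) = Z_1(t) - \exp\Big\{\tfrac{t^2}{2}\Big\}\int_t^\infty \exp\Big\{-\tfrac{u^2}{2}\Big\}Z^2(u)\,\d u.
\]
A straightforward induction using the recursion defining $Z_n$ upgrades this identity to
\[
  Z(t) - Z_n(t) = -\exp\Big\{\tfrac{t^2}{2}\Big\}\int_t^\infty \exp\Big\{-\tfrac{u^2}{2}\Big\}\big(Z^2(u)-Z_{n-1}^2(u)\big)\,\d u, \quad n\geq 1.
\]

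To run the induction for the rate \eqref{star}, I first establish $|Z_1(t)| = O(t^{-1/2}(\log t)^{1/2})$ a.s. The deterministic part is $O(t^{-1})$ by the classical Mills ratio asymptotic, while the stochastic part, a Gaussian with variance $\exp\{t^2\}\int_t^\infty \exp\{-u^2\}\,\d u \sim 1/(2t)$, admits an a.s.\ bound $O(t^{-1/2}\log^{1/2} t)$ via Borel--Cantelli (or a time change reducing it to Brownian motion). Combined with Theorem \ref{Theorem4}, this gives $|Z(t)|\leq t^{-1/2+\varepsilon/2}$ eventually. Now suppose inductively that $|Z(t)-Z_{n-1}(t)|\leq C_{n-1}\,t^{-(3n-2)/2+\varepsilon/2}$ for $t\geq T_\omega$. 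Writing $Z^2-Z_{n-1}^2=(Z-Z_{n-1})(Z+Z_{n-1})$ and bounding $|Z+Z_{n-1}|\leq 2|Z|+|Z-Z_{n-1}| = O(t^{-1/2+\varepsilon/2})$ yields $|Z^2-Z_{n-1}^2| = O(t^{-3n/2+1/2+\varepsilon})$. Plugging into the integral identity above and invoking the Mills-type estimate $\exp\{t^2/2\}\int_t^\infty \exp\{-u^2/2\}u^{-\alpha}\,\d u = O(t^{-\alpha-1})$ gives $|Z(t)-Z_n(t)| = O(t^{-3n/2-1/2+\varepsilon})$, which is \eqref{star}.

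For uniform convergence on $[T,\infty)$, the constants $C_n$ arising from the induction grow at most geometrically, while the polynomial rate improves by a factor $t^{-3/2}$ at each step; choosing $T=T(\omega)$ large enough that this geometric growth is dominated yields $\sup_{t\geq T}|\RR_n(t)-\RR(t)|\leq C_n T^{-3n/2-1/2+\varepsilon}\to 0$ as $n\to\infty$. The main difficulty I anticipate is the bookkeeping: one must verify that the $\omega$-dependent implied constants in the Mills-ratio asymptotics can be controlled uniformly in $n$, so that a single $T(\omega)$ suffices for every iterate. The key enabling ingredient throughout is Theorem \ref{Theorem4}'s sub-polynomial decay of $Z$, without which the boundary term at infinity in the integrating-factor identity could not be discarded and the base case of the induction would fail.
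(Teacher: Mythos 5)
Your proposal is correct and follows essentially the same route as the paper: derive the integral identity $Z(t) - Z_n(t) = -\exp\{t^2/2\}\int_t^\infty \exp\{-u^2/2\}\big(Z^2(u) - Z_{n-1}^2(u)\big)\,\d u$ via the integrating factor $\exp\{-t^2/2\}$, then bootstrap the decay rate by induction with Theorem \ref{Theorem4} supplying the base case $|Z(u)|\leq u^{-1/2+\varepsilon}$ (your auxiliary Gaussian/Borel--Cantelli bound on $|Z_1|$ is never actually invoked in the induction and can simply be dropped). The bookkeeping concern you flag at the end is dispatched cleanly in the paper by working with $\Gamma_n(t):=\sup_{u\geq t}|\RR_n(u)-\RR(u)|$ in place of pointwise constants $C_n$: the iteration becomes $\Gamma_n(t)\leq \Gamma_{n-1}(t)\tfrac1t\big(\Gamma_{n-1}(t)+2t^{-1/2+\varepsilon}\big)\leq 3t^{-3/2+\varepsilon}\Gamma_{n-1}(t)$ once $t\geq t_1(\varepsilon,\omega)$, so a single ray $[T(\omega),\infty)$ yields the geometrically decaying bound $\Gamma_n(t)\leq(3/4)^{n-1}t^{-3/2}$ with constants that shrink rather than grow in $n$.
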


   \begin{proof}
Put $Z(t)=\RR (t)-t$. Then, using Theorem \ref{Theorem4}, $Z$ is the unique solution of the equation
     $$
\d z= \big(tz+z^2-1\big)\,\d t+\sigma\,\d W(t),\;t \geq 0
$$
such that $Z(t)$ converges to 0 as $t\to \infty$ ($Z$ may be $-\infty$ up to some random time $\zeta$). Then, for $t > \zeta$,
\begin{align*}
Z(t)
                 &=\exp\Big\{\frac{t^2}2 \Big\} \Big( -\sigma \int_t^\infty \exp\Big\{-\frac{u^2}2 \Big\}\,\d W(u)-\int_t^\infty  \exp\Big\{-\frac{u^2}2 \Big\}\big(Z^2(u)-1\big)\,\d u     \Big)\\
  &=Z_1(t)-\exp\Big\{\frac{t^2}2 \Big\}  \int_t^\infty  \exp\Big\{-\frac{u^2}2 \Big\}Z^2(u)\,\d u,   
\end{align*}
and therefore
\begin{align*}
  \RR_n(t)-\scr{R}(t)& = \RR_1(t)-\RR(t)+\sum_{k=2}^n\big(\RR_k(t)-\RR_{k-1}(t)\big)\\
&=Z_1(t)-Z(t)-\sum_{k=2}^n \exp\Big\{\frac{t^2}2 \Big\} \int_t^\infty \exp\Big\{-\frac{u^2}2\Big\}\big(Z_{k-1}^2(u)-Z_{k-2}^2(u)\big)\d u\\
                     &=\exp\Big\{\frac{t^2}2 \Big\}  \int_t^\infty  \exp\Big\{-\frac{u^2}2\Big\} \Big( Z^2(u)-Z_{n-1}^2(u)\Big)\d u\\
  &=\exp\Big\{\frac{t^2}2 \Big\}  \int_t^\infty  \exp\Big\{-\frac{u^2}2\Big\} \Big( Z(u)-Z_{n-1}(u)\Big)\Big(Z(u)+Z_{n-1}(u)\Big)\d u.
\end{align*}
Define $\Gamma_n(t):=\sup_{u\geq t}\big| \RR_n(u)-\scr{R}(u)
\big|$. Choose $\varepsilon < 1/2$. By Theorem \ref{Theorem4}, there
exists some $t_0(\varepsilon,\omega)<\infty$ such that
$|Z(u)|\leq u^{-\frac12+\varepsilon}$ for all
$u \geq t_0(\varepsilon,\omega)$. Hence, for
$t \geq t_0(\varepsilon,\omega)$,
\begin{align*}
  \Gamma_n(t)&\leq \exp\Big\{\frac{t^2}2 \Big\}  \Gamma_{n-1}(t)\frac 1t \int_t^\infty u  \exp\Big\{-\frac{u^2}2\Big\}\d u\, \Big( \Gamma_{n-1}(t)+2t^{-\frac 12 +\varepsilon}\Big)\\
             &=\Gamma_{n-1}(t) \frac 1t  \Big(\Gamma_{n-1}(t)+2t^{-\frac 12 +\varepsilon}\Big).
\end{align*}
Let $t \geq t_1(\varepsilon,\omega):= 3\vee t_0(\varepsilon,\omega)$.
Since $\varepsilon<1/2$, by induction,
$\Gamma_{n}(t)\leq t^{-\frac 12 + \varepsilon}$ (this is true for
$n=1$) since
  $$
\Gamma_n(t)\leq 3t^{-\frac 32 +\varepsilon} \Gamma_{n-1}(t)\leq \Gamma_{n-1}(t),\, t \geq t_1(\varepsilon,\omega).
$$
Hence, for $t \geq t_1(\varepsilon,\omega)$, we obtain
$$
\Gamma_n(t)\leq \big(3t^{-\frac
  32+\varepsilon}\Big)^{n-1}\Gamma_1(t)\leq \big(3t^{-\frac
  32+\varepsilon}\big)^{n-1}t^{-2 +2\varepsilon},
$$
so \eqref{star} follows. Further, for
$t \geq T(\omega):=4\vee t_1(\frac 14,\omega)$, we have
$$
\Gamma_n(t)\leq \big(3 \cdot 4^{-\frac 32+\frac 14}\big)^{n-1}
t^{-2+\frac 12}\leq \Big(\frac 34\Big)^{n-1}t^{-\frac 32},
$$
which converges to 0 uniformly on $[T(\omega),\infty)$, so the final
assertion follows.
\end{proof}
%
%
\begin{rem}
Corollary \ref{Rbar} together with Remark \ref{inft} shows that $\RR_1(s)$ is a much better asymptotic approximation to $\RR(s)$ than $s$: we have $\big|\RR_1(s)-\RR(s)\big|=o(s^{-2+\varepsilon})$ while $\big|s-\RR(s)\big|$ is not even of order $s^{-1/2}$.
\end{rem}
  
\begin{rem}
  The recursion in the previous corollary can also be applied when $\sigma=0$. Then, by \eqref{asymp}, $Z(t)=O(\frac 1t)$, hence $\Gamma_1(t)\leq O(t^{-3})$ and $\Gamma_{n}\leq \Gamma_{n-1}(t) O(t^{-2})$
  (where the $O(t^{-2})$ term can be chosen independently of $n$), so $\Gamma_n(t)\leq O\big( \frac 1{t^{2n+1}}\big)$ in accordance with \eqref{asymp}. This approximation is similar to the one proposed by Blais, see \cite{Blais}, but the derivation is different.
\end{rem}

\section{Proofs of Theorem \ref{1main} and Theorem \ref{Theorem2}}

Let us explain the strategy of the proofs. Let
$$
C(s,x):=\P\Big( \lim_{t \to \infty}X_{s,t}(x)=0\Big),\,x\in \R,\,s \geq 0,
$$
so $C(s,x)$ is the probability that $X_{s,t}(x)$ converges to 0 as
$t \to \infty$.  Clearly, $C(s,x)+B(s,x)\leq 1$ since a trajectory
which blows up cannot converge to 0 at the same time. Therefore, the
statement in Theorem \ref{1main} is equivalent to the statement that
$C(s,x)+B(s,x)\geq 1$ for all $x \in \R$ and $s \geq 0$.  We will say
that a statement concerning $s$ is {\em very likely} if its
probability converges to 1 as $s \to \infty$. We will show the
following:
\begin{itemize}
\item The solution with initial condition $(s,x)$ with $x\geq s+1$
  will either blow-up or hit the diagonal for some $t \geq s$. It is
  very likely that blow up happens before hitting the diagonal.  In
  particular, $\lim_{s \to \infty}B(s,s+1)=1$ (Lemma \ref{lemma}). It
  seems plausible that the map $s \mapsto B(s,s+1)$ is non-decreasing
  but we will not prove this and instead only use that
  $\bar B(s):=\inf_{u \geq s}B(u,u+1)$ converges to 1 as
  $s \to \infty$.
\item The solution with initial condition $(s,x)$ with $|x-s|< 1$ will
  hit one of the two lines parallel to the diagonal at distance 1 for
  some $t \geq s$.  We will provide rather precise asymptotic results
  for the probability $p^+(s,x)$ of hitting the upper line before the
  lower line in Lemma \ref{lem2}. Together with Lemma \ref{lemma}, we
  thus obtain a lower bound for the blow-up probability for these
  initial conditions namely $B(s,x) \geq p^+(s,x)\bar B(s)$. These
  bounds will later turn out also to be asymptotic upper bounds but so
  far, we can not exclude the possibility that solutions starting at
  $|x-s|\leq 1$ and hitting the lower set $\{(t,t-1):\,t \geq s\}$
  will still return to a neighborhood of the diagonal many times and
  eventually blow up with large probability (or even almost surely).
\item The solution with initial condition $(s,x)$ with $x \in (0,s-1]$
  reaches the set $\{0,s\}$ in finite time. Let $\rho(s,x)$ be the
  probability that it hits level 0 before level $s$. Lemma
  \ref{schwieriger} says that
  $\lim_{s \to \infty}\inf_{x \leq s-1}\rho(s,x)=1$ which implies that
  $\bar \rho(s):= \inf_{t \geq s}\inf_{x \in (0,s-1]}\rho(t,x)$
  converges to 1 as well.

\item The solution with initial condition $(s,x)$, $x <0$ will hit $0$
  after finite time (irrespective of the starting time $s\geq
  0$). This is Lemma \ref{einfach}.
    \item  Let $\chi(s)$ be the probability that the solution $X_{s,t}(0)$, $t \geq s$ 
      will remain in $[-1,1]$ for all $t \geq s$ and that
      $\lim_{t \to \infty}X_{s,t}(0)=0$. Lemma \ref{gegennull} states that $\lim_{s \to \infty}\chi(s)=1$. Hence $\bar \chi(s):=\inf_{u \geq s}\chi(u)$ converges to 1 as well.
    \end{itemize}
      These statements together imply the claims in Theorems \ref{1main} and \ref{Theorem2} as follows:

      Let $$\tilde p(s,x):=\left\{\begin{array}{ll}p^+(s,x), &|x-s|\leq 1\\
                                    1, &x>s+1\\
                                    0, &x<
                                         s-1.
                                   \end{array} \right.
                                 $$
                                 Then $B(s,x)\geq \tilde p(s,x)\bar B(s)$ for all $x \in \R$, $s \geq 0$.

                                 Further,
               $$C(s,x) \geq \big(1-\tilde p(s,x)\big) \bar \rho(s)\bar \chi(s).$$

        Hence,
        \begin{equation}\label{BC}
          \inf_{x \in \R}\Big(B(s,x)+C(s,x)\Big)\geq \bar B(s)\wedge\big(\bar \rho(s)\bar\chi(s)\big)
          \end{equation}
      which converges to 1 as $s \to \infty$.

      Let $\kappa(s,x)$ be the probability that the solution starting at $(s,x)$ neither blows up nor converges to 0. Equation \eqref{BC} shows that $\lim_{s \to \infty}\sup_{x \in \R}\kappa(s,x)=0$.
      Therefore, for $t\geq s$ and $x \in \R$,
      \begin{equation}\label{kappe}
\kappa(s,x)\leq\int_\R \kappa(t,y)\,\P\big(X_{s,t}(x)\in\d y\big)
\end{equation}
(the ``$\leq$'' is due to the fact that there may be solutions which reach $y$ at time $t$ and which are $-\infty$ at time $s$) converges to 0 as $t \to \infty$, so $\kappa(s,x)=0$ for all $s \geq 0$ and all $x \in \R$ showing that $B(s,x)+C(s,x)\geq 1$ as desired and Theorem \ref{1main} follows.\\

Theorem \ref{Theorem2} is now an easy consequence of these considerations and Lemma \ref{lem2}. Fix $z \in \R$. We saw above that
$$
B\Big(s,s+z\frac \sigma {\sqrt{2s}}\Big)\geq \tilde p\Big(s,s+z\frac \sigma {\sqrt{2s}}\Big)\cdot \bar B(s)
$$
and
$$
B\Big(s,s+z\frac \sigma {\sqrt{2s}}\Big)=1-C\Big(s,s+z\frac \sigma {\sqrt{2s}}\Big)\leq 1-\Big(1-\tilde  p\Big(s,s+z\frac \sigma {\sqrt{2s}}\Big)\Big)\bar \rho(s)\bar \chi(s).
$$
Inserting the limit $\Phi(z)$ of $\tilde  p\Big(s,s+z\frac \sigma {\sqrt{2s}}\Big)$ established in Lemma \ref{lem2} the statement  of Theorem \ref{Theorem2} follows.\\

We will now formulate and prove the lemmas mentioned above. Since the proof of Theorem \ref{Theorem4} will require finer estimates of $B(s,x)$ than those of Theorems \ref{1main} and \ref{Theorem2} we will
formulate the lemmas accordingly. The first
lemma provides a lower bound for the blow-up probability when a trajectory starts slightly above the diagonal. The stated result is in fact more precise than needed to prove the two theorems. It will  be convenient to work with the process
$$
Y_{s,t}(x):=X_{s,t}(x+s)-t,\;0\leq s\leq t.
$$
Note that $Y_{s,.}(x)$ solves the equation
\begin{equation}\label{why}
\d Y(t)=\big(Y(t) (Y(t) +t)-1     \big)\,\d t +\sigma\,\d W(t), \,t\geq s,\,Y(s)=x.
\end{equation}

\begin{lem}\label{lemma}
For each  $\alpha \in [0,1/2)$, we have
$$
B\big(s,s+s^{-\alpha}\big)\geq 1 - \exp\Big\{ -\frac 1{\sigma^2}s^{1-2\alpha}\big(1+o(1)\big)\Big\},\,s \to\infty.
$$
In particular, $\lim_{s \to \infty}\inf_{x \geq s+s^{-\alpha}}B(s,x)=1$.
\end{lem}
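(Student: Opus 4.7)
The plan is to linearize equation \eqref{why} around $Y=0$, use an SDE comparison to bound $Y:=Y_{s,\cdot}(s^{-\alpha})$ from below by the resulting Gaussian diffusion, and then reduce the blow-up event to a tail estimate controlled by the reflection principle for Brownian motion.

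First I would introduce $\tilde Y$ as the solution of the linear SDE
\[
\d \tilde Y(t) = \bigl(t\tilde Y(t)-1\bigr)\,\d t + \sigma\,\d W(t),\quad \tilde Y(s)=s^{-\alpha},
\]
driven by the same Brownian motion $W$ as $Y$. On $[s,\beta_s(s+s^{-\alpha}))$ the noise cancels in $D:=Y-\tilde Y$, which satisfies the pathwise ODE $D'(t)=Y(t)^2+tD(t)$ with $D(s)=0$; integrating via the factor $\e^{-t^2/2}$ yields $D(t)=\e^{t^2/2}\int_s^t \e^{-u^2/2}Y(u)^2\,\d u \geq 0$, so $Y\geq \tilde Y$ wherever $Y$ is defined. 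The linear SDE admits the explicit representation
\[
\tilde Y(t)=\e^{(t^2-s^2)/2}M(t),\qquad M(t):= s^{-\alpha} - \int_s^t \e^{-(u^2-s^2)/2}\,\d u + \sigma \int_s^t \e^{-(u^2-s^2)/2}\,\d W(u).
\]
On the event $E:=\{\inf_{t\geq s}M(t)>0\}$, the bound $\tilde Y(t)\geq (\inf_t M)\,\e^{(t^2-s^2)/2}\to \infty$ combined with $Y\geq \tilde Y$ forces $Y(t)$ to exceed any finite level; applying Proposition \ref{bfinite} at a sufficiently late time then gives $\beta_s(s+s^{-\alpha})<\infty$ on $E$, so $B(s,s+s^{-\alpha})\geq \P(E)$.

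To estimate $\P(E^c)$, decompose $M=m+N$, where $m(t):= s^{-\alpha}-\int_s^t \e^{-(u^2-s^2)/2}\,\d u$ is non-increasing and $N(t):=\sigma\int_s^t \e^{-(u^2-s^2)/2}\,\d W(u)$ is a continuous $L^2$-martingale. The Mills-ratio asymptotics $\int_s^\infty \e^{-u^2/2}\,\d u \sim \e^{-s^2/2}/s$ and $\int_s^\infty \e^{-u^2}\,\d u \sim \e^{-s^2}/(2s)$ give $m(\infty)=s^{-\alpha}(1+o(1))$ (using $\alpha<1$) and $\langle N\rangle_\infty = \frac{\sigma^2}{2s}(1+o(1))$. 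Monotonicity of $m$ yields $\inf_{t\geq s}M(t)\geq m(\infty)+\inf_{t\geq s}N(t)$, and the Dambis-Dubins-Schwarz representation $N(t)=B(\langle N\rangle_t)$ together with the reflection principle applied to the standard Brownian motion $B$ gives
\[
\P\bigl(\inf_{t\geq s}N(t)\leq -m(\infty)\bigr) = 2\Phi\Bigl(-\frac{m(\infty)}{\sqrt{\langle N\rangle_\infty}}\Bigr).
\]
The standard Gaussian tail $\Phi(-x)\leq \frac{1}{x\sqrt{2\pi}}\e^{-x^2/2}$, applied with $x=m(\infty)/\sqrt{\langle N\rangle_\infty}\sim \sqrt{2}\,s^{1/2-\alpha}/\sigma$ (which tends to $\infty$ precisely because $\alpha<1/2$), yields
\[
\P(E^c)\leq \exp\Bigl\{-\frac{1}{\sigma^2}\,s^{1-2\alpha}(1+o(1))\Bigr\},
\]
which is the claimed lower bound on $B(s,s+s^{-\alpha})$. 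The ``in particular'' assertion is immediate from the monotonicity of $x\mapsto B(s,x)$ provided by Lemma \ref{monot}.

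The only real technical care is in the bookkeeping of the two $(1+o(1))$ factors through the Mills-ratio expansions and in checking that the polynomial prefactor $\sqrt{\langle N\rangle_\infty}/m(\infty)=O(s^{\alpha-1/2})$ from the Gaussian tail bound gets absorbed into $o(1)$ inside the exponent $s^{1-2\alpha}/\sigma^2$; this is routine since for fixed $\alpha\in [0,1/2)$ any power of $s$ is $\exp\{o(s^{1-2\alpha})\}$.
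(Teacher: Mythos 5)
Your argument is correct, and it follows a genuinely different route than the paper's own proof. The paper also passes to the process $Y=X-t$ and compares it with a tractable process, but the comparison process it uses is $\bar Y$ obtained by freezing the time variable in the drift at $t=s$; this yields an autonomous one-dimensional diffusion for which the paper invokes Feller's test of explosion and the scale function (Lemma~\ref{1ddiff}) to compute $\P(\inf_{t\geq s}\bar Y(t)>0)=p(\gamma)/p(\infty)$, which is then estimated by splitting the scale integral into two pieces. You instead linearize by dropping the quadratic term: since the noise is additive, the difference $D=Y-\tilde Y$ satisfies a pathwise ODE with nonnegative forcing $Y^2$, giving $Y\geq\tilde Y$ unconditionally (whereas the paper's comparison $Y\geq\bar Y$ requires tracking nonnegativity of $Y$). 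Your $\tilde Y$ is Gaussian, and the blow-up event is reduced to the running infimum of a time-changed Brownian motion, estimated by the reflection principle and the standard Gaussian tail; the two approaches land on the same asymptotic $\e^{-s^{1-2\alpha}/\sigma^2}$. The trade-off is that the paper's autonomous comparison process blows up almost surely on $\{\inf\bar Y>0\}$ by Feller's test, for free, while your $\tilde Y$ only grows like $\e^{t^2/2}$, so you need the extra step of converting super-exponential growth of $Y$ into finite-time blow-up via Proposition~\ref{bfinite}. That step is correct but stated rather briskly: to close it one should observe that the $\omega$-dependent threshold in Proposition~\ref{bfinite}, which is essentially $3-H_*$ with $H_*=\inf_{[t_0,t_0+1]}(H(\cdot)-H(t_0))$, grows only sub-polynomially in $t_0$ by the modulus of continuity of Brownian motion, whereas $X(t_0)\geq (\inf M)\,\e^{(t_0^2-s^2)/2}+t_0$ grows super-exponentially, so the threshold is eventually exceeded pathwise. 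Also note that the displayed equality for $\P(\inf_{t\geq s}N(t)\leq -m(\infty))$ holds because $\langle N\rangle_\infty$ is deterministic, so Dambis--Dubins--Schwarz reduces it to the infimum of a standard Brownian motion over the fixed interval $[0,\langle N\rangle_\infty]$; you then only need $\P(E^c)\leq$ that quantity (since $E^c\subseteq\{\inf N\leq -m(\infty)\}$), not equality. With these small clarifications your proof is a valid alternative.
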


\begin{proof} We want to bound the probability $P(\gamma,s):=B(s,s+\gamma)$ of blow-up of $X$ starting from $X_{s,s}(s+\gamma)=s+\gamma$ or, equivalently, of $Y$ starting from $Y_{s,s}(\gamma)=\gamma$ for a given $\gamma>0$ and $s>0$. We compare $Y_{s,t}(\gamma)$ with the solution $\bar Y$ of the equation
\begin{equation}\label{ybar}
\d\,\bar Y(t) =\big(\bar Y(t)(\bar Y(t)+s)-1\big)\,\d t + \sigma\,\d W(t),\,t \geq s,\bar Y(s)=\gamma.
\end{equation}
$\bar Y$ solves an equation with  coefficients independent of $t$ and is therefore easier to analyze than $Y$.
Note that $\bar Y(t) \leq Y(t)$ as long as $Y$ is non-negative. Further, $\bar Y$ blows up to $\infty$ with probability 1 by  Feller's test of explosion, see Lemma \ref{1ddiff}c), applied to $(l,r)=\R$. Therefore,
$$
Q(\gamma,s):=\P\big(\inf_{t \geq s}  \bar Y(t) > 0\big) \leq P(\gamma,s).
$$
Let $b(u):=u(u+s)-1$, and  denote by $p$ the {\em scale function} of $\bar Y$, i.e.
$$
p(x):=\int_0^x \exp\Big\{-\frac 2 {\sigma^2}\int_0^y b(u)\,\d u\Big\}\,\d y=    
\int_0^x \exp\Big\{-\frac 2 {\sigma^2}\Big(\frac{y^3}3+s\frac{y^2}2-y     \Big)\Big\}\,\d y,\,x \in \R.
$$
Then, by Lemma \ref{1ddiff}a) with $r=\infty$ and $l=0$,
$$Q(\gamma,s)=\frac {p(\gamma)}{p(\infty)}.
$$
Note that $p(\infty)<\infty$. We have
$$
p(\infty)=\int_0^\gamma \exp\Big\{-\frac 2 {\sigma^2}\Big(\frac{y^3}3+s\frac{y^2}2-y     \Big)\Big\}\,\d y+\int_\gamma^\infty \exp\Big\{-\frac 2 {\sigma^2}\Big(\frac{y^3}3+s\frac{y^2}2-y     \Big)\Big\}\,\d y=:A+B,
$$
so $Q(\gamma,s)=\frac A{A+B}$. To obtain a lower bound, we estimate $B$ from above and $A$ from below. 
Now we assume  that $s \geq 1$. Then
\begin{align*}
B &\leq \int_\gamma^\infty \exp\Big\{-\frac 2 {\sigma^2}\Big(s\frac{y^2}2-y     \Big)\Big\}\,\d y\\
&=\frac\sigma{\sqrt{2s}}\exp\Big\{  \frac 1{s\,\sigma^2 }  \Big\}\int^\infty_{\frac{\sqrt{2s}\gamma}\sigma -\frac{\sqrt 2}{\sigma \sqrt s}}\exp\Big\{-\frac {z^2}2\Big\}\,\d z,
\end{align*}
and 
\begin{align*}
A &\geq \exp\Big\{-\frac 2{\sigma^2} \frac{\gamma^3}3\Big\}  \int_0^\gamma \exp\Big\{-\frac 2 {\sigma^2}\Big(s\frac{y^2}2-y     \Big)\Big\}\,\d y\\
&    =  \exp\Big\{-\frac 2{\sigma^2} \frac{\gamma^3}3\Big\}  \frac\sigma{\sqrt{2s}}\exp\Big\{  \frac 1{s\,\sigma^2 }  \Big\}\int_{-\frac{\sqrt 2}{\sigma \sqrt s}}^{\frac{\sqrt{2s}\gamma}\sigma -\frac{\sqrt 2}{\sigma \sqrt s}}\exp\Big\{-\frac {z^2}2\Big\}\,\d z.
\end{align*}
Inserting $\gamma=s^{-\alpha}\big(\leq 1\big)$, setting $u:=\frac{\sqrt{2s}\gamma}\sigma -\frac{\sqrt 2}{\sigma \sqrt s}$,  and using the estimate
$$
\int_R^{\infty}\exp\Big\{ -\frac 12 z^2   \Big\}\,\d z \leq \sqrt{\frac \pi 2} \exp\Big\{-\frac 12 R^2\Big\}
$$ 
for any $R \geq 0$, we get
\begin{align*}
  1-Q\big( s^{-\alpha},s\big)&\leq \frac BA \leq \exp\Big\{\frac 23 \sigma^{-2}\Big\} \int_u^{\infty} \exp\Big\{ -\frac 12 z^2   \Big\}\d z\Bigg(\int_0^u \exp\Big\{ -\frac 12 z^2   \Big\}\d z\Bigg)^{-1}\\
  &= \exp\Big\{\frac 23 \sigma^{-2}\Big\} \int_u^{\infty} \exp\Big\{ -\frac 12 z^2   \Big\}\d z\Bigg(\sqrt{\frac \pi 2}-\int_u^\infty \exp\Big\{ -\frac 12 z^2   \Big\}\d z\Bigg)^{-1}\\
  &\leq  \exp\Big\{\frac 23 \sigma^{-2}\Big\} \exp\Big\{-\frac {u^2}2   \Big\}  \Big(1- \exp\Big\{-\frac {u^2}2   \Big\} \Big).\\
                             &\leq  \exp\Big\{\frac 23 \sigma^{-2}\Big\} \exp\Big\{-\sigma^{-2}\big( s^{\frac 12-\alpha}-s^{-\frac 12}\big)^2\Big\}\big(1+o(1)\big),\;s \to \infty.
\end{align*}
Therefore, using the writing $\kappa_1(s)\lesssim \kappa_2(s),\;s \to \infty$ whenever $\limsup_{s \to \infty}\Big(\kappa_1(s)/\kappa_2(s)\Big)\leq 1$,
$$
\log\big(1-P\big(s^{-\alpha},s\big)\big)\leq \log\big(1-Q\big(s^{-\alpha},s\big)\big)\lesssim -\frac 1{\sigma^2}s^{1-2\alpha}, \; s \to \infty,
$$
so the statement of the lemma follows as $\alpha<1/2$.
\end{proof}

\begin{rem}
  In the special case $\alpha=0$, the statement of the previous lemma can be regarded as a (one-sided) {\em large deviations} estimate: the  probability of no blow-up of the process
  starting at $(s,s+1)$ decays (at least) exponentially fast as $s \to \infty$. Note that the exponential rate depends on $\sigma$: the smaller $\sigma$ the faster is the exponential
  decay rate. This is not surprising because in the deterministic limit $\sigma \to 0$, solutions starting from $(s,s+1)$ will certainly blow-up as long as $s$ is not too small.
\end{rem}

\begin{rem}
Together with Theorem \ref{1main} (whose proof is not yet complete), the previous lemma shows, in particular, that $\RR(s)$ is {\em not} contained in the set $I_2(s,\omega)$ introduced in Remark \ref{theremark}, so whenever $\RR(s)>-\infty$ then there is a {\em trichotomy} of asymptotic fates of the solutions starting at time $s$: if $x>\RR(s)$, then $X_{s,t}(x)$ blows up to $\infty$; if $x=\RR(s)$, then $X_{s,t}(x)$ does not blow up but $\limsup_{t \to \infty} X_{s,t}(x)>0$ while for $x<\RR(s)$, $\lim_{t \to \infty}X_{s,t}(x)=0$. We will establish more precise statements on the asymptotics of $\RR(s)$ later (Theorem \ref{Theorem4}).  
  \end{rem}
  
  Next, we study solutions starting within distance 1 of the diagonal. Define $Y$ as in \eqref{why}.

\begin{lem}\label{lem2}
  For $x \in [-1,1]$, $Y(t),\,t \geq s$ with initial condition $Y(s)=x$ will exit the interval $[-1,1]$ in finite time, almost surely.   Let $p^+(s,x)$ be the probability that $Y(t)$, $t \geq s$ exits the interval $[-1,1]$ via 1. Then, for $z \in \R$,
  \begin{equation}\label{limm}
\lim_{s \to \infty}p^+\Big(s,z\frac {\sigma}{\sqrt{2s}}\Big)= \Phi(z):=\frac 1{\sqrt{2\pi}}\int_{-\infty}^z\exp\Big\{-\frac{y^2}2 \Big\}\,\d y.
\end{equation}
Further, for $\alpha \in [0,1/2)$,
$$
p^+\big(s,-s^{-\alpha}\big) \leq \exp\Big\{-\frac 1{\sigma^2}s^{1-2\alpha}\big(1+o(1)\big)    \Big\},\;s \to \infty.
$$
\end{lem}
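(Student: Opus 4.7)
The plan is to sandwich $Y$ between two explicitly solvable linear SDEs driven by the same Brownian motion and reduce the exit problem to Gaussian computations. With common initial condition $Z(s)=\hat Z(s)=x$ and the same $W$ as in the definition of $Y$, let
\[
\d Z=(tZ-1)\,\d t+\sigma\,\d W,\qquad \d \hat Z=t\hat Z\,\d t+\sigma\,\d W.
\]
At a common value $v$ the drifts satisfy $tv-1\le v^2+tv-1$ everywhere and $v^2+tv-1\le tv$ for $v\in[-1,1]$, so the pathwise comparison theorem (truncating the $v^2$ term at $1$ for the upper inequality) yields $Z(t)\le Y_{s,t}(x)$ for all $t\ge s$ and $Y_{s,t}(x)\le \hat Z(t)$ as long as $Y_{s,t}(x)\in[-1,1]$. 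Using the integrating factor $e^{-t^2/2}$, $Z(t)e^{-t^2/2}\to L$ and $\hat Z(t)e^{-t^2/2}\to\hat L$ a.s., where
\[
L=xe^{-s^2/2}-I_1(s)+\sigma\!\int_s^\infty\! e^{-u^2/2}\,\d W,\quad \hat L=xe^{-s^2/2}+\sigma\!\int_s^\infty\! e^{-u^2/2}\,\d W,
\]
with $I_1(s):=\int_s^\infty e^{-u^2/2}\,\d u$, both Gaussian with variance $\sigma^2 I_2(s)$, $I_2(s):=\int_s^\infty e^{-u^2}\,\d u$. Since $L,\hat L$ have densities, $Z$ and $\hat Z$ are a.s.\ unbounded and exit $[-1,1]$ in finite time. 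That $Y$ also exits in finite time a.s.\ follows by a standard Markov-type iteration: on any unit time interval the probability of staying in $[-1,1]$ is uniformly bounded away from $1$ (bounded drift, non-degenerate diffusion), so iterating gives $\P(Y\in[-1,1]\,\forall t)=0$.

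For the Gaussian limit, observe that if $Z$ first exits $[-1,1]$ through $+1$ at time $\tau_Z$ then $Y(\tau_Z)\ge 1$ forces $Y$ to have hit $+1$ at or before $\tau_Z$; conversely $Y(\tau_Y)=1$ forces $\hat Z(\tau_Y)\ge 1$. Hence
\[
\P(Z\text{ exits via }+1)\;\le\;p^+(s,x)\;\le\;\P\big(\hat Z(t)\ge 1 \text{ for some }t\ge s\big).
\]
I would then check that the two sides equal $\P(L>0)+o(1)$ and $\P(\hat L>0)+o(1)$ respectively as $s\to\infty$: by the strong Markov property at the first-passage time $\tau\ge s$ and the explicit linear evolution from height $\pm 1$, the conditional probability of being subsequently swept to the opposite infinity is at most $\Phi(-\sqrt{2\tau}/\sigma\,(1+o(1)))$, uniformly $o(1)$ in $\tau\ge s$. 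Inserting $x=z\sigma/\sqrt{2s}$ together with the asymptotics $I_1(s)\sim e^{-s^2/2}/s$ and $\sigma\sqrt{I_2(s)}\sim \sigma e^{-s^2/2}/\sqrt{2s}$, both ratios $(xe^{-s^2/2}-I_1(s))/(\sigma\sqrt{I_2(s)})$ and $xe^{-s^2/2}/(\sigma\sqrt{I_2(s)})$ converge to $z$, and the two sandwich bounds both converge to $\Phi(z)$.

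For the large-deviations estimate with $x=-s^{-\alpha}$, use only the upper comparison. Write $M(t):=\hat Z(t)e^{-t^2/2}$; this is a continuous martingale with $M(s)=-s^{-\alpha}e^{-s^2/2}<0$ and $[M]_\infty-[M]_s=\sigma^2 I_2(s)<\infty$. Since $\hat Z(t)\ge 1$ iff $M(t)\ge e^{-t^2/2}>0$, the event $\{\hat Z\text{ hits }1\}$ is contained in $\{\sup_{t\ge s}M(t)>0\}$. Dambis--Dubins--Schwarz realises $M-M(s)$ as a Brownian motion run up to time $\sigma^2 I_2(s)$, and the reflection principle gives
\[
\P\!\Big(\sup_{t\ge s}M(t)>0\Big)=2\Phi\!\Big(\tfrac{M(s)}{\sigma\sqrt{I_2(s)}}\Big)=2\Phi\!\Big(-\tfrac{\sqrt 2}{\sigma}s^{1/2-\alpha}(1+o(1))\Big).
\]
The Gaussian tail bound $\Phi(-v)\le e^{-v^2/2}$ then delivers $p^+(s,-s^{-\alpha})\le \exp\{-\sigma^{-2}s^{1-2\alpha}(1+o(1))\}$.

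The main obstacle is the ``return'' correction in the middle step: showing that $|\P(Z\text{ exits via }+1)-\P(L>0)|$ and $|\P(\hat Z\text{ hits }1)-\P(\hat L>0)|$ are both $o(1)$ as $s\to\infty$. This requires $\Phi(-\sqrt{2\tau}/\sigma(1+o(1)))=o(1)$ to hold uniformly in the random first-passage time $\tau\ge s$, which I would handle via the strong Markov property and the explicit Gaussian distribution of the linear processes restarted from height $\pm 1$ at time $\tau$, noting that the corresponding mean/standard-deviation ratio grows like $\sqrt{2\tau}$ and hence is at least $\sqrt{2s}$.
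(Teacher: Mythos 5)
Your comparison processes are genuinely different from the paper's: the paper freezes the time variable in the drift, comparing $Y$ on the short window $[s,s+s^\alpha]$ with the time-homogeneous diffusions $\d Z_A=(sZ_A+A)\,\d t+\sigma\,\d W$ for $A=\pm s^\alpha$ (up to constants), and then invokes scale functions plus a Lyapunov bound $\E[T(s,x,A)]\le s+4/\sigma^2$ to justify restricting to that window. You instead keep the time-dependent coefficient, sandwiching $Y$ between the time-inhomogeneous but explicitly solvable linear processes $\d Z=(tZ-1)\,\d t+\sigma\,\d W$ and $\d\hat Z=t\hat Z\,\d t+\sigma\,\d W$ and converting the exit problem into a Gaussian computation via the integrating factor $\e^{-t^2/2}$. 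The sandwich inequalities, the strong-Markov ``return correction'' argument bounding $|\P(Z\text{ exits via }+1)-\P(L>0)|$ uniformly in the first-passage time, and the DDS/reflection bound for the large-deviations estimate are sound in outline (with the details you flag still to be worked out), and this is a legitimate alternative route to \eqref{limm} and to the tail estimate.

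There is, however, a genuine gap in your proof of the first claim, namely that $Y$ exits $[-1,1]$ in finite time almost surely. The parenthetical justification ``bounded drift, non-degenerate diffusion'' is false: the drift $b(t,y)=y^2+ty-1$ of $Y$ on $[-1,1]$ grows linearly in $t$, so a Girsanov-type uniform bound on the one-step staying probability does not come for free and in fact degrades as $t\to\infty$. Your sandwich does not close the case either: on the event $\{L<0<\hat L\}$, which has strictly positive probability for each fixed $s$ because $\hat L=L+I_1(s)$ with $I_1(s)>0$ and $L$ has a density, one has $Z\to-\infty$ and $\hat Z\to+\infty$, and neither $Z\le Y$ nor $Y\le\hat Z$ (the latter valid only while $Y\in[-1,1]$) forces $Y$ out of the strip. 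The paper closes exactly this hole by first establishing $p^+(s,x)+p^-(s,x)\ge 1+o(1)$ (using the Lyapunov exit-time bound and both one-sided comparisons) and then propagating via the Markov-type inequality \eqref{kappe} to get $p^++p^-=1$ for every $s$. You would need either that propagation step (which your lower bounds $\P(Z\text{ exits via }+1)$ and $\P(\hat Z\text{ exits via }-1)$ would feed into, since their sum tends to $1$) or a correct uniform staying-probability bound that exploits the \emph{instability} of the drift near its time-dependent zero rather than its (nonexistent) boundedness.
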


\begin{proof}
  Instead of $Y$, we first consider the solution $Z_A$ of
 \begin{equation}\label{ZZZ}
\d Z_A(t)=\big(sZ_A(t)+A\big)\,\d t +\sigma \d W(t),\,t \geq s;\, Z_A(s)=x \in [-1,1],
\end{equation}
where $A \in \R$ may depend on $s$ (but neither on $t$ nor on $Z$).
Let
$$
q_A(x):=\int_0^x\exp \Big\{-\frac 2{\sigma^2} \int_0^y\big(us+A\big)\,\d u   \Big\}\,\d y
$$
be the scale function of $Z_A$. Then
\begin{align*}
q_A(x)&=\int_0^x \exp \Big\{-\frac 2{\sigma^2} \Big(\frac{sy^2} 2+Ay\Big)   \Big\}\,\d y\\
&=\int_{\frac{2A}{\sigma \sqrt{2s}}}^{\frac {\sqrt{2s}x}\sigma + \frac{2A}{\sigma\sqrt{2s}}} \exp\Big\{-\frac 12 v^2\Big\}\,\d v \cdot \frac \sigma{\sqrt{2s}}  \exp\Big\{\frac {A^2}{\sigma^2 s}\Big\}.
\end{align*}
By Lemma \ref{1ddiff}a), the probability $p_A(s,x)$ that $Z_A$ exits the interval $[-1,1]$ via 1 equals
\begin{equation}\label{frac}
\frac{q_A(x)-q_A(-1)}{q_A(1)-q_A(-1)}=\frac{\Phi\Big( \frac {\sqrt{2s}x}\sigma +\frac{2A}{\sigma \sqrt{2s}}\Big) - \Phi\Big( -\frac {\sqrt{2s}}\sigma +\frac{2A}{\sigma \sqrt{2s}}\Big)}
{\Phi\Big( \frac {\sqrt{2s}}\sigma +\frac{2A}{\sigma \sqrt{2s}}\Big) - \Phi\Big( -\frac {\sqrt{2s}}\sigma +\frac{2A}{\sigma \sqrt{2s}}\Big)}.
\end{equation}
For given $z \in \R$, define $x=z\frac \sigma{\sqrt{2s}}$. Then, by \eqref{frac},
\begin{equation}\label{assame}
\lim_{s \to \infty} p_A\Big(s, z\frac \sigma{\sqrt{2s}}\Big)=\Phi(z),
\end{equation}
provided that $A=A(s)=o(\sqrt{s})$.

Next, we compare  $Y$ and, for specific functions $A$, $Z_A$  both with the same initial condition $x\in [-1,1]$ at time $s$.
For $A=A(s)=o\big(s^{1/2}\big)$ let
$$
T(s,x,A):=\inf\{t \geq s: |Z_A(t,x)|=1\}
$$ denote the first time after $s$ when  $Z_A$ starting at $Z_A(s)=x\in[-1,1]$ hits $\{-1,1\}$. We will show that, for every such function $A=o\big(s^{1/2}\big)$,
\begin{equation}\label{zuzeigen}
  \zeta_A:=\limsup_{s \to \infty}\Big(\sup_{x \in [-1,1]}\E \big[T(s,x,A)\big]-s\Big)<\infty.
\end{equation}

Once we have established \eqref{zuzeigen}, we apply it to 
$\bar A(s):=s^\alpha$, $\underline A(s):=-s^\alpha-1 $ for some fixed value of $\alpha \in (0,1/2)$.  We estimate the drift function $b(t,y)$ of $Y$ from above
by $b(t,y)=y^2-1+yt= y^2-1+ys+y(t-s) \leq ys + \bar A(s)$ 
and, from below, by $b(t,y)=y^2-1+yt= y^2-1+ys+y(t-s) \geq ys + \underline A(s)$ whenever $t \in [s,s+s^\alpha]$, and $y \in [-1,1]$.

Fix $s >0$ and $z \in \R$ such that $x:=z\frac{\sigma}{\sqrt{2s}}\in[-1,1]$ and consider  $Z_{\bar A}$ and $Z_{\underline A}$ with initial condition $x$ at time $s$.
By comparison, we have $Z_{\underline A}(t)\leq Y(t) \leq Z_{\overline A}(t)$ for $t \in [s,s+s^\alpha]$ up to the minimum of the exit times $T(s,x,\bar A)$ and  $T(s,x,\underline A)$ of
$Z_{\overline A}$ and $Z_{\underline A}$ from $[-1,1]$, where $Y$ denotes the solution of \eqref{why} with the same initial condition $x$.  We saw that, as $s \to \infty$, the probability of exiting the
interval $[-1,1]$ via 1 is asymptotically the same for   $Z_{\underline A}$ and for $Z_{\bar A}$. In order to ensure that this is also true for $Y$ we have to show that the exit times from $[-1,1]$
of these processes are at most $s+s^{\alpha}$ with probabilities converging to 1 as $s \to \infty$. We have
\begin{align*}
  p^+(s,x)&\geq \P\big(Z_{\underline{A}}\mbox{ exits }[-1,1] \mbox{ via } 1 \mbox{ before or at time } s+s^{\alpha}\big)\\
  &=\P\big( Z_{\underline{A}}\mbox{ exits }[-1,1] \mbox{ via } 1\big)-  \P\big(Z_{\underline{A}}\mbox{ exits }[-1,1] \mbox{ via } 1 \mbox{ after time }s+s^{\alpha}\big).
\end{align*}
Define $p^-(s,x)$ as the probability that $Y$ starting at $x\in [-1,1]$ at time $s$ exits $[-1,1]$ via -1 (clearly, $p^+(s,x)+p^-(s,x)\leq 1$ but it is not yet clear that there is equality). Then, analogously,
\begin{align*}
  p^-(s,x)&\geq \P\big(Z_{\bar{A}}\mbox{ exits }[-1,1] \mbox{ via } -1 \mbox{ before or at time } s+s^{\alpha}\big)\\
  &=\P\big( Z_{\bar{A}}\mbox{ exits }[-1,1] \mbox{ via } -1\big)-  \P\big(Z_{\bar{A}}\mbox{ exits }[-1,1] \mbox{ via } -1 \mbox{ after time }s+s^{\alpha}\big).
\end{align*}  
By Markov's inequality, \eqref{assame} and \eqref{zuzeigen}, we have
$$
 p^+(s,x)+ p^-(s,x)\geq 1+o(1)-\frac{\zeta_{\bar A}}{s^{\alpha}}- \frac{\zeta_{\underline A}}{s^{\alpha}}\to 1,\;s \to \infty,
 $$
 and the same argument as in \eqref{kappe} shows that we actually have $ p^+(s,x)+ p^-(s,x)=1$ for every $|x|\leq 1$ and $s$, so the very first statement in the lemma follows. Further,
 \begin{align*}
   1-\P\big( Z_{\underline{A}}\mbox{ exits }[-1,1] \mbox{ via } -1\big)+o(1)&\geq 1-p^-\big(s,z\frac {\sigma}{\sqrt{2s}}\big)\\
   &=p^+\big(s,z\frac {\sigma}{\sqrt{2s}}\big)\geq \P\big( Z_{\bar{A}}\mbox{ exits }[-1,1] \mbox{ via } 1\big)+o(1),
\end{align*}
and therefore, by \eqref{assame}, \eqref{limm} follows.


It remains to prove \eqref{zuzeigen}. Define
$$
V(t):=\Big(Z_A(t,x)+\frac As\Big)^2,\,t \geq s.
$$
and assume that $s$ is so large that $\frac As \in (-1,1)$. Then, by It\^o's formula, for $t \geq s$,
$$
V(t)=\Big(x+\frac As\Big)^2+2s\int_s^t V(u)\,\d u+M_t+\sigma^2 (t-s),
$$
where $M_t$, $t \geq s$ is a continuous local martingale satisfying $M_s=0$. Let $\tau$ be the first time after $s$ when $V$ attains the value 4. Then, since $M_{t\wedge \tau}$ is a martingale starting at 0 and since $V \geq 0$, we obtain
$$
4 \geq \Big(x+\frac As\Big)^2 + \sigma^2 \Big(\E [\tau]-s\Big),
$$
so
$$
\E [\tau] \leq s+ \frac 4{\sigma^2}.  
$$
This implies $\E \big[T(s,x,A)\big]-s \leq \E [\tau]-s \leq \frac 4{\sigma^2}$ for all $x \in [-1,1]$. 
This completes the proof of the first part of the lemma.

The final claim in the lemma   follows easily from \eqref{frac} :
\begin{align*}
p^+\big(s,-s^{-\alpha}\big)&\leq p_0(s,-s^{-\alpha}\big)=\frac{\Phi\Big(-\frac1\sigma \sqrt{2s}s^{-\alpha}\Big)-\Phi\Big(-\frac1\sigma \sqrt{2s}\Big)}{\Phi\Big(\frac1\sigma \sqrt{2s}\Big)-\Phi\Big(-\frac1\sigma \sqrt{2s}\Big)}\\
&\sim  \Phi\Big(-\frac1\sigma \sqrt{2}s^{\frac 12-\alpha}\Big)  \leq\exp\Big\{-\frac 1{\sigma^2}s^{1-2\alpha}\big(1+o(1)\big)    \Big\}.
\end{align*}

\end{proof}

\begin{rem}
  It should not come as a surprise that the upper bound for the expected exit time computed in the previous proof blows up as $\sigma \to 0$. After all, in the deterministic case, for $s$ sufficiently large, there exists an initial condition $(s,x)$ with $x \in [s-1,s+1]$ for which the solution stays in the interval $(s-1,s+1)$ forever.
\end{rem}

The two previous lemmas together provide an asymptotic lower bound for the blow-up probability for initial conditions of distance at most 1 from the diagonal:  
Lemma \ref{lem2} provides an asymptotic bound for the probability of reaching $s+1$ before reaching $s-1$ and Lemma \ref{lemma} states that once we hit $s+1$ the probability of blow-up converges to 1 as $s \to \infty$.

Our next aim is to show that all solutions starting below 0 will almost surely hit 0 later (Lemma \ref{einfach}) and that with probability converging to 1 as $s \to \infty$, trajectories starting from $(s,s-s^{-\alpha})$ will hit 0 before hitting level $s$ whenever $\alpha \in [0,1/2)$ (Lemma \ref{schwieriger}).

\begin{lem}\label{einfach}
For every $x \in \R$ and $s \geq 0$, we have, almost surely, $\limsup_{t \to \infty}X_{s,t}(x)\geq 0$.
\end{lem}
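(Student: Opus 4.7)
The plan is to argue by contradiction, exploiting the fact that when $X$ is bounded away from zero in the negative half-line and $u$ is large, the drift $X(X-u)$ becomes strongly positive. Suppose, for contradiction, that the event $E := \{\limsup_{t\to\infty} X_{s,t}(x) < 0\}$ has positive probability. Writing
\[
E \;=\; \bigcup_{\varepsilon \in \mathbb Q_{>0}}\;\bigcup_{T \in \N,\,T \geq s \vee 1} A_{\varepsilon,T}, \qquad A_{\varepsilon,T} := \{X_{s,u}(x) \leq -\varepsilon \text{ for all } u \geq T\},
\]
at least one $A_{\varepsilon,T}$ has positive probability. By Proposition~\ref{Xinf} the solution cannot reach $-\infty$ in finite time, so $X_{s,T}(x)$ is almost surely finite on that event, and one can work with the integral form of \eqref{SDE} from time $T$ onwards.

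On $A_{\varepsilon,T}$ and for $u \geq T \geq 1$ one has $X_{s,u}(x)\leq -\varepsilon < 0$, so
\[
X_{s,u}(x)\bigl(X_{s,u}(x)-u\bigr) \;=\; X_{s,u}(x)^2 - u\,X_{s,u}(x) \;\geq\; -u\,X_{s,u}(x) \;\geq\; \varepsilon u.
\]
Integrating \eqref{SDE} from $T$ to $t$ therefore yields, on $A_{\varepsilon,T}$,
\[
X_{s,t}(x) \;\geq\; X_{s,T}(x) \;+\; \tfrac{\varepsilon}{2}\bigl(t^2 - T^2\bigr) \;+\; \sigma\bigl(W(t) - W(T)\bigr)
\]
for every $t \geq T$.

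The polynomial lower bound grows like $t^2$, whereas by the law of the iterated logarithm $W(t) - W(T) = o(t^2)$ almost surely. Hence $X_{s,t}(x) \to +\infty$ on $A_{\varepsilon,T}$, which contradicts the defining inequality $X_{s,t}(x) \leq -\varepsilon$ on that event. Consequently $\P(A_{\varepsilon,T}) = 0$ for every admissible pair, so $\P(E) = 0$ and the lemma follows. The only real subtlety is the reduction from a random escape level and random hitting time to the deterministic parameters $\varepsilon, T$ via the countable union above; after that, the comparison between the quadratically growing drift and the sublinear Brownian fluctuation is immediate.
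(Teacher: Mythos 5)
Your argument is correct. It is close to the paper's in spirit---both rest on the observation that the drift $X(X-u)$ is nonnegative when $X\leq 0$ and $u\geq 0$, combined with a growth property of Brownian motion---but the quantitative details differ. The paper uses only $X(X-u)\geq 0$, which yields $X_{s,u}(x)\geq X_{s,t}(x)+\sigma\big(W(u)-W(t)\big)$ for as long as the solution stays negative after time $t$; then $\limsup_{u\to\infty}\big(W(u)-W(t)\big)=\infty$ forces the process to return to $0$ after any time $t$ at which it is negative, so no $\varepsilon$-decomposition is needed (a countable reduction over starting times $t$ is still implicitly required, as in your argument). You instead insist on a definite gap $-\varepsilon$ below zero, obtaining the stronger bound $X(X-u)\geq\varepsilon u$ and hence quadratic growth, so only the much weaker Brownian fact $W(t)=o(t^2)$ is needed, at the cost of the union over rational $\varepsilon$. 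Both routes are elementary and sound; yours gives the slightly stronger intermediate conclusion that $X_{s,t}(x)\to+\infty$ on the bad event, while the paper's is marginally shorter.
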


\begin{proof}
  Fix $x \in \R$ and $s \geq 0$. If $t \geq s$  and $X_{s,t}(x)<0$, then, for  $\tau:=\inf \{u \geq t:\, X_{s,u}(x)\geq 0\}$ and $u \in [t,\tau)$, we have
  \begin{align*}
    0 \geq X_{s,u}(x)&=X_{s,t}(x)+\int_s^uX_{s,v}(x)\big( X_{s,v}(x)-v\big)\,\d v +\sigma\big(W(u)-W(s)\big)\\
    &\geq X_{s,t}(x)   +\sigma\big(W(u)-W(s)\big).
\end{align*}
Since $\limsup_{u \to \infty} \big(W(u)-W(s)\big) =\infty$, we obtain $\tau<\infty$ almost surely, so the claim follows.
\end{proof}

\begin{lem}\label{schwieriger}
  For $s \geq 0$ and $x \in [0,s]$ let $\rho(s,x)$ be the probability that $X_{s,t}(x)$ hits level 0 before level $s$. Then, for $\alpha \in [0,1/2)$,
  \begin{equation}\label{claim1}
1-\rho(s,s-s^{-\alpha})\leq \exp\Big\{-\frac 1{\sigma^2}s^{1-2\alpha}\big(1+o(1)\big)\Big\} 
  \end{equation}
  and
  \begin{equation}\label{claim2}
1-\rho(s,1)\leq \exp\Big\{-\frac 1{3\sigma^2}s^3\big(1+o(1)\big)\Big\}. 
  \end{equation}
\end{lem}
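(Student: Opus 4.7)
The plan is to upper bound $1-\rho(s,x)$ by a tractable time-homogeneous comparison diffusion. Let $Z$ satisfy
\[
\d Z(t) = Z(t)\bigl(Z(t)-s\bigr)\,\d t + \sigma\,\d W(t),\quad Z(s)=x,
\]
driven by the same $W$. For $y\in[0,s]$ and $t\geq s$ one has $y(y-t)\leq y(y-s)$, so the standard SDE comparison theorem (applied up to the first time either process leaves $[0,s]$, to avoid issues caused by $Z$'s globally quadratic drift) gives $X_{s,t}(x)\leq Z(t)$ as long as both processes remain in $[0,s]$. Consequently, if $X$ exits $[0,s]$ through $s$, so must $Z$, and by Lemma \ref{1ddiff}(a),
\[
1-\rho(s,x)\leq \P\bigl(Z\text{ hits }s\text{ before }0\bigr) = \frac{q(x)}{q(s)}, \quad q(y):=\int_0^y e^{E(w)}\,\d w,
\]
where $E(w):=sw^2/\sigma^2-2w^3/(3\sigma^2)$.

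The next step is a Laplace-type analysis of $q$. Note $E'(w)=2w(s-w)/\sigma^2\geq 0$ on $[0,s]$, so $E$ is increasing with maximum value $E(s)=s^3/(3\sigma^2)$, and the expansion around $w=s$,
\[
E(s-u)=\frac{s^3}{3\sigma^2}-\frac{s u^2}{\sigma^2}+\frac{2u^3}{3\sigma^2},
\]
displays Gaussian decay on the scale $\sigma/\sqrt{s}$ near the maximum. Restricting the integral defining $q(s)$ to a neighbourhood of $w=s$ of width somewhat larger than $\sigma/\sqrt{s}$ and using this quadratic profile yields
\[
q(s)\geq \frac{\sigma\sqrt{\pi}}{2\sqrt{s}}\exp\Bigl(\frac{s^3}{3\sigma^2}\Bigr)\bigl(1+o(1)\bigr),\quad s\to\infty,
\]
while the monotonicity of $E$ on $[0,s]$ provides the crude upper bound $q(x)\leq x\,e^{E(x)}$ for $x\in[0,s]$.

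Plugging in the two choices of $x$ then finishes the argument. For $x=s-s^{-\alpha}$, the Taylor expansion gives $E(x)=s^3/(3\sigma^2)-\sigma^{-2}s^{1-2\alpha}(1+o(1))$; the exponentials $\exp(s^3/(3\sigma^2))$ cancel in the ratio $q(x)/q(s)$, leaving a polynomial-in-$s$ prefactor multiplying $\exp\bigl(-\sigma^{-2}s^{1-2\alpha}(1+o(1))\bigr)$, and the polynomial is absorbed into the $(1+o(1))$ since $\log s=o(s^{1-2\alpha})$ for $\alpha<1/2$; this yields \eqref{claim1}. For $x=1$ one has $E(1)=s/\sigma^2-2/(3\sigma^2)$, so $q(1)/q(s)$ is bounded by a polynomial in $s$ times $\exp\bigl(s/\sigma^2-s^3/(3\sigma^2)\bigr)=\exp\bigl(-s^3/(3\sigma^2)(1+o(1))\bigr)$; absorbing the polynomial gives \eqref{claim2}. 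The main technical point to watch is the lower bound on $q(s)$: it must be sharp up to polynomial factors so that those factors get absorbed into the $(1+o(1))$ by the dominating exponentials $s^{1-2\alpha}$ (first claim) and $s^3$ (second claim); the comparison-and-scale-function step is routine once that estimate is in hand.
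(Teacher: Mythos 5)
Your proof is correct and follows essentially the same route as the paper's: the same time-homogeneous comparison diffusion with drift $y(y-s)$, the same scale-function formula from Lemma~\ref{1ddiff}a), and the same Laplace-type analysis of the integrals (the paper phrases it as a lower bound on $p(s)-p(s-\gamma)$ and then uses $\tfrac{p(s-\gamma)}{p(s)}\leq\tfrac{p(s-\gamma)}{p(s)-p(s-\gamma)}$, whereas you lower-bound $q(s)$ directly by a Gaussian integral near $w=s$, but the estimates are the same up to reorganization). The Taylor expansions at $x=s-s^{-\alpha}$ and $x=1$ and the absorption of the $s^{3/2}$ prefactor into the $(1+o(1))$ in the exponent using $\log s = o(s^{1-2\alpha})$ are handled correctly.
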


\begin{proof}
  The proof is similar to that of Lemma \ref{lemma} but this time it is more convenient to work with the process $X$ directly rather than with the transformed process $Y$.

  Fix $s > 0$ and $\gamma \in (0,s)$. Let $\bar X(t)$, $t \geq s$ solve
  $$
\d \bar X(t)=\bar X(t)\big(\bar X(t)-s\big)\,\d t+\sigma \d W(t),\;t \geq s,\,\bar X(s)=s-\gamma.
$$
Then, almost surely, $\bar X(t) \geq X(t)$ as long as $\bar X$ is non-negative. Denote by $p$ the scale function of $\bar X$, i.e.
$$
p(x):=\int_0^x\exp \Big\{ -\frac 2 {\sigma^2}\int_0^y b(u)\,\d u\Big\}\,\d y, \; x \in \R,
$$
where
$$
b(u)= u(u-s),
$$
so
$$
p(x)=\int_0^x\exp \Big\{ -\frac 2 {\sigma^2}\Big(\frac 13 y^3 - \frac s2 y^2\Big)  \Big\}\,\d y=s\int_0^{x/s}\exp\Big\{\frac 2 {\sigma^2} s^3\Big(\frac{z^2}2-\frac{z^3}3  \Big)   \Big\}\,\d z  , \; x \in \R.
$$
By Lemma \ref{1ddiff}a), the probability that $\bar X$ leaves the interval $[0,s]$ via $s$ equals
\begin{equation}\label{vgl}
\frac {p(s-\gamma)}{p(s)}\geq 1-\rho(s,s-\gamma).
\end{equation}
Since $p'$ is non-decreasing on $[0,s]$, we have
\begin{align*}
  p(s-\gamma)&\leq s \exp\Big\{\frac 2 {\sigma^2} s^3\Big(1-\frac\gamma s\Big)^2\Big(\frac 12 -\frac{1-\frac \gamma s}3 \Big)   \Big\}\\
  &= s\exp\Big\{\frac 2 {\sigma^2} s^3\Big(\frac 16 -\frac 12 \frac {\gamma^2}{s^2}+\frac 13 \frac {\gamma^3}{s^3}\Big)\Big\}
\end{align*}
and, since $\frac{z^2}2-\frac{z^3}3\geq \frac 16-\frac 12(1-z)^2$ for $z \in  [0,1]$, we obtain
\begin{align*}
  p(s)-p(s-\gamma)&\geq s\int_{1-\frac \gamma s}^1 \exp\Big\{ \frac 2 {\sigma^2} s^3\Big(\frac 16-\frac 12 (1-z)^2\Big)\Big\}\,\d z\\
&= \frac {\sigma}{\sqrt{2}} \exp\Big\{ \frac 2 {\sigma^2} \frac {s^3}6\Big\} s^{-\frac 12}\int_0^{\frac {\sqrt{2}}\sigma s^{1/2}\gamma} \exp\Big\{-\frac 1 2 u^2\Big\}\,\d u.
\end{align*}
Therefore, the probability that $\bar X$ leaves the interval $[0,s]$ via $s$ equals
\begin{align*}
  \frac{p(s-\gamma)}{p(s)}&=\frac{p(s-\gamma)}{p(s-\gamma)+p(s)-p(s-\gamma)}\leq \frac{p(s-\gamma)}{p(s)-p(s-\gamma)}\\
                          &\leq s^{3/2}\frac{\exp\Big\{-\frac {\gamma^2s}{\sigma^2}+\frac 23 \frac {\gamma^3}{\sigma^2}\Big\}}{\frac{\sigma}{\sqrt{2}} \int_0^{\frac {\sqrt{2}}\sigma s^{1/2}\gamma} \exp\Big\{-\frac 1 2 u^2\Big\}\,\d u}.
\end{align*}
Inserting $\gamma=s^{-\alpha}$, observing that the denominator converges to a finite constant as $s \to \infty$, and using \eqref{vgl},  we obtain   \eqref{claim1}.
Inserting $\gamma=s-1$, we obtain \eqref{claim2}.
\end{proof}

\begin{lem}\label{gegennull}
  For $s \geq 0$,   let
  $$
  \chi(s):=\P\big(\big\{\lim_{t \to \infty} X_{s,t}(0)=0\big\}\cap \big\{\sup_{t \geq s}\big|X_{s,t}(0)\big|<1\big\}\big).
  $$
  Then
  $$
  \lim_{s \to \infty}\chi(s)=1.
  $$
\end{lem}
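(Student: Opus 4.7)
The strategy is to sandwich $X:=X_{s,\cdot}(0)$ between two linear Gaussian comparison processes. Define
\[
\d\underline X=-t\underline X\,\d t+\sigma\,\d W,\qquad \d\bar X=(1-t\bar X)\,\d t+\sigma\,\d W,
\]
with $\underline X(s)=\bar X(s)=0$. Since $(y^2\wedge 1)-ty\in[-ty,1-ty]$ for every $y\in\R$ and $t\geq s$, the SDE
\[
\d X'=\bigl((X')^2\wedge 1-tX'\bigr)\,\d t+\sigma\,\d W,\qquad X'(s)=0,
\]
yields, by the standard comparison theorem, $\underline X\leq X'\leq\bar X$ for all $t\geq s$; moreover $X'$ coincides with $X$ up to the first exit of $X$ from $[-1,1]$. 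On the event
\[
G_s:=\Bigl\{\sup_{t\geq s}|\underline X(t)|<\tfrac12\Bigr\}\cap\Bigl\{\sup_{t\geq s}|\bar X(t)|<\tfrac12\Bigr\},
\]
$X'$ never leaves $[-\tfrac12,\tfrac12]$, so $X=X'$ for all $t\geq s$; hence the lemma reduces to proving $\P(G_s)\to 1$ together with a.s.\ convergence $\underline X,\bar X\to 0$ on $G_s$.

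The explicit solutions are $\underline X(t)=\sigma e^{-t^2/2}\int_s^t e^{u^2/2}\,\d W(u)$ and $\bar X(t)=\underline X(t)+D(t)$ with $D(t):=e^{-t^2/2}\int_s^t e^{u^2/2}\,\d u$; integration by parts gives $\sup_{t\geq s}|D(t)|=O(1/s)$, so only $\underline X$ needs genuine control. I apply the time change $\tau=(t^2-s^2)/2$ and set $\tilde X(\tau):=\underline X\bigl(\sqrt{s^2+2\tau}\bigr)$, which solves
\[
\d\tilde X=-\tilde X\,\d\tau+\frac{\sigma}{\sqrt{t(\tau)}}\,\d\tilde W,\qquad \tilde X(0)=0,
\]
for a standard Brownian motion $\tilde W$ and $t(\tau):=\sqrt{s^2+2\tau}\geq s$. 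Thus $\tilde X$ is an Ornstein--Uhlenbeck process of unit mean-reversion rate driven by noise of intensity $\sigma/\sqrt{t(\tau)}\leq\sigma/\sqrt{s}$ that moreover decays to $0$ as $\tau\to\infty$.

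Decompose $[0,\infty)=\bigcup_{k\geq 0}[k,k+1]$ and on $[k,k+1]$ write $\tilde X(\tau)=\tilde X(k)e^{-(\tau-k)}+N_k(\tau)$ with $N_k(\tau):=\sigma\int_k^\tau e^{-(\tau-r)}t(r)^{-1/2}\,\d\tilde W(r)$. An exponential martingale bound applied to $\sigma\int_k^\cdot e^r t(r)^{-1/2}\,\d\tilde W$, whose quadratic variation up to $k+1$ is at most $\sigma^2 e^{2(k+1)}/t(k)$, gives $\P\bigl(\sup_{\tau\in[k,k+1]}|N_k(\tau)|>\tfrac18\bigr)\leq 2\exp\bigl(-c\,t(k)/\sigma^2\bigr)$ for an absolute $c>0$; summing over $k$ and using $t(k)\geq s$ produces a bound that vanishes as $s\to\infty$. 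On the complementary event, induction from $\tilde X(0)=0$ via $|\tilde X(k+1)|\leq|\tilde X(k)|/e+1/8$ delivers $|\tilde X(k)|<1/4$, hence $|\tilde X(\tau)|<3/8$ throughout, so $\sup|\underline X|<1/2$; combined with $D=O(1/s)$ also $\sup|\bar X|<1/2$ for large $s$. The $N_k$ are independent centered Gaussians with variances $O(1/t(k))\to 0$, so Borel--Cantelli yields $|N_k|\to 0$ a.s., and the convolution estimate $|\tilde X(k)|\leq\sum_{j<k}e^{-(k-1-j)}|N_j(j+1)|$ then forces $\tilde X(\tau)\to 0$; likewise $\bar X\to 0$, and the sandwich yields $X\to 0$ on $G_s$.

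\textbf{Main obstacle.} The delicate step is the uniform-in-$k$ Gaussian tail bound on $\sup_{\tau\in[k,k+1]}|N_k|$. A direct Doob inequality on the martingale $M(t):=\int_s^t e^{u^2/2}\,\d W$ in the original $t$-variable is ineffective: the prefactor $e^{-t^2/2}$ of $\underline X$ is exactly cancelled by the $e^{t^2/2}$ growth of $M$, so contributions from successive time intervals fail to sum. Only after passing to the unit-rate OU coordinate $\tau$, where the noise intensity $\sigma/\sqrt{t(\tau)}$ is genuinely decaying, does a summable tail bound become available.
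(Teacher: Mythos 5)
Your proof is correct, and its high-level strategy matches the paper's: sandwich $X_{s,\cdot}(0)$ between the linear Gaussian processes with drifts $-ty$ and $1-ty$ (the paper calls them $X_0$ and $X_1$), then show the sandwich stays in a compact set with probability tending to $1$ and converges to $0$ a.s. Your truncation device — running the drift $(y^2\wedge1)-ty$ so the comparison $\underline X\leq X'\leq\bar X$ holds globally rather than only up to an exit time — is a clean variant of the paper's bookkeeping with the four stopping times $\bar\tau_1,\underline\tau_1,\underline\tau_0$ and the four claims i)--iv). Where you genuinely diverge is in controlling $\sup_{t\geq s}\big|\sigma e^{-t^2/2}\int_s^t e^{u^2/2}\,\d W(u)\big|$. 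The paper applies the Dambis--Dubins--Schwarz theorem to write the stochastic integral as $\overline W\big(\int_s^t e^{u^2}\d u\big)$ and then invokes the law of the iterated logarithm, obtaining the pathwise bound $\sup_{t\geq s}|\underline X(t)|\lesssim \sigma\,s^{-1/2}\log s + \sigma e^{-s^2/2}\overline C(\omega)$ (with an $s$-dependent but identically distributed random constant $\overline C$), hence convergence in probability to $0$. You instead switch to the natural Ornstein--Uhlenbeck clock $\tau=(t^2-s^2)/2$, turning $\underline X$ into a unit-rate OU process driven by noise of strength $\sigma/\sqrt{t(\tau)}\leq\sigma/\sqrt s$, and then sum per-unit-interval exponential-martingale tail bounds. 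This avoids the LIL, is more quantitative (you get an explicit $\exp(-cs/\sigma^2)$-type tail), and gives a.s.\ convergence to $0$ directly via Borel--Cantelli rather than through the $\overline C$ device. Both routes are valid.

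Two small remarks. In your ``main obstacle'' paragraph you say a direct analysis of $M(t)=\int_s^t e^{u^2/2}\d W(u)$ in the original $t$-variable is ineffective; that is true for a naive Doob maximal estimate, but the LIL supplies the extra $\sqrt{\log\log}$ refinement and makes the direct route work — this is exactly what the paper does. And you write that you need ``a.s.\ convergence $\underline X,\bar X\to 0$ on $G_s$''; in fact your Borel--Cantelli argument for $\sup_{\tau\in[k,k+1]}|N_k(\tau)|\to 0$ and the ensuing convolution estimate are unconditional, so $\underline X,\bar X\to 0$ a.s.\ regardless of $G_s$; it is cleanest to state it that way and only use $G_s$ to guarantee $X=X'$.
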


\begin{proof} Fix $s \geq 0$ and let $A\geq 0$. Define the processes $X$  and $X_A$ on $[s,\infty)$ as follows:
  \begin{align*}
\d X(t)&=X(t)\big(X(t)-t\big)\,\d t +\sigma \d W(t), \quad X(s)=0,\\
\d X_A(t)&=\big(A-tX_A(t)\big)\,\d t +\sigma \d W(t),\quad X_A(s)=0.
\end{align*}

Let $\bar \tau:= \inf\big\{t \geq s:\, X(t)\geq 1\big\}$, $\underline \tau:= \inf\big\{t \geq s:\,X(t)\leq -1\big\}$,      $\bar \tau_A := \inf\big\{t \geq s:\, X_A(t)\geq 1\big\}$ and $\underline \tau_A:= \inf\big\{t \geq s:\,X_A(t)\leq -1\big\}$. Note that
$X(t)\leq  X_1(t)$ for $t \in [s, \bar \tau_1\wedge \underline \tau_1]$ and  $X(t)\geq  X_0(t)$ and $X_1(t)\geq X_0(t)$ for $t \geq s$ (here, $X_0(t)$ and $X_1(t)$ correspond to $X_A(t)$ with $A=0$ and $A=1$, respectively).   
Therefore, it suffices to show that
\begin{itemize}
\item [i)] $\lim_{s \to \infty}\P\big( \bar \tau_1 =\infty\big)=1$.
\item [ii)] $\limsup_{t \to \infty} X_1(t)\leq 0$ almost surely for each $s \geq 2$.
\item [iii)] $\lim_{s \to \infty}\P\big( \underline \tau_0 =\infty\big)=1$.
\item [iv)] $\liminf_{t \to \infty} X_0(t)\geq 0$ almost surely for each $s \geq 2$.
\end{itemize}

The process $X_A$ solves an affine SDE. Its solution is given by
\begin{equation}\label{XA}
X_A(t)=A\int_s^t \exp\Big\{ \frac {u^2}2-\frac {t^2}2\Big\}\,\d u +\sigma \exp\Big\{-\frac {t^2}2    \Big\} \int_s^t \exp\Big\{ \frac {u^2}2\Big\}\,\d W(u),\;t \geq s.
\end{equation}
We will now assume that $s \geq 2$. Observe that, for $\kappa >0$,
\begin{align}\label{kette}
\int_s^t  \exp\big\{\kappa u^2\big\}\d u &\leq \int_{t-1}^t  \exp\big\{\kappa u^2\big\}\d u + \int_s^{(t-1)\vee s}  \exp\big\{\kappa u^2\big\}\d u \nonumber\\
  &\leq  \int_{t-1}^t \frac u{t-1} \exp\big\{\kappa u^2\big\}\d u + \int_s^{(t-1)\vee s} \frac us \exp\big\{\kappa u^2\big\}\d u\\
  &\leq  \frac 1{2\kappa (t-1)}\exp\big\{ \kappa t^2\big\}+\frac 1{2\kappa s}\exp\big\{\kappa (t-1)^2\big\}.\nonumber
\end{align}
Therefore, the first term in \eqref{XA} is bounded from below by 0 and from above by
\begin{equation}\label{first}
A \Big(\frac 1{t-1}+\frac 1s\exp\big\{-t+\frac 12\big\}\Big)
\end{equation}
for every $t \geq s\geq 2$. The stochastic integral in \eqref{XA} is a continuous martingale and hence a time-changed Brownian motion (\cite[Theorem 3.4.6]{KS}): there exists a Brownian motion $\overline{W}$ such that, for $t \geq s$, 
$$
\int_s^t \exp\Big\{ \frac {u^2}2\Big\}\,\d W(u)=\overline{W}\Big(\int_s^t\exp\big\{u^2\big\}\,\d u\Big).
$$
The law of the iterated logarithm for Brownian motion (\cite[Theorem 2.9.23]{KS}) implies that there exists a random variable $\overline{C}(\omega)$ such that, for every $r \geq 0$,
$$
\overline{W}(r) \leq \overline{C}(\omega)+ 2\sqrt{r}\log^+\log^+r,
$$
where $\log^+r:=0\vee \log r$. Hence, for $t \geq s \geq 2$,
$$
X_A(t)\leq A \Big(\frac 1{t-1}+\frac 1s\exp\big\{-t+\frac 12\big\}\Big)            +\sigma \exp\Big\{-\frac {t^2}2    \Big\} \Bigg( \overline{C}(\omega)+ 2\sqrt{\int_s^t  \exp\big\{u^2\big\}\d u}\, 2 \log t\Bigg).
$$
Noting that $\sqrt{a+b}\leq \sqrt{a}+\sqrt{b}$ whenever $a,b\geq 0$, we obtain, using \eqref{kette} with $\kappa =1$, 
\begin{align*}
X_A(t)
  \leq  &A \Big(\frac 1{t-1}+\frac 1s\exp\Big\{-t+\frac 12\Big\}\Big)\\
    &+\sigma \exp\Big\{-\frac {t^2}2    \Big\}\,\overline{C}(\omega)+\frac 4 {\sqrt{2}}\sigma \log t \Big(  \frac 1{\sqrt{t-1}}+ \frac 1{\sqrt{s}}\exp\Big\{-t+\frac 12 \Big\}  \ \Big).
\end{align*}
Clearly, $\limsup_{t \to \infty}X_A(t)\leq 0$ almost surely and $\sup_{t \geq s}X_A(t)$ converges to 0 as $s \to \infty$ in probability, so i) and ii) follow taking $A=1$. Observing that
the processes $X_0$ and $-X_0$ have the same law, iii) and iv)  
follow as well and the proof is complete.
\end{proof}

The following corollary is not needed to prove Theorems \ref{1main} and \ref{Theorem2} but will be needed to prove Theorem \ref{Theorem4}.

\begin{cor}\label{coneu}
   $$
B(s,0)\leq   \exp\Big\{-\frac 1{3\sigma^2}s^3\big(1+o(1)\big)\Big\}.
$$
Further, for $\alpha \in [0,1/2)$,
  $$
  B\big(s,s-s^{-\alpha}\big)\leq \exp\Big\{-\frac 1{\sigma^2}s^{1-2\alpha}\big(1+o(1)\big)\Big\}.
  $$
\end{cor}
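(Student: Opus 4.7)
The strategy is to reduce both bounds to Lemma \ref{schwieriger} via a strong-Markov decomposition at the first time the trajectory leaves the strip $[0,s]$. For $x\in(0,s)$, let $\tau_0$ and $\tau_s$ be the first hitting times of levels $0$ and $s$ by $X_{s,\cdot}(x)$; on $\{\tau_s<\tau_0\}$ the subsequent blow-up probability is at most $1$, while on $\{\tau_0<\tau_s\}$ it equals $B(\tau_0,0)$, so the strong Markov property at $\tau_0\wedge\tau_s$ yields the tower inequality
\[
B(s,x)\;\le\;\bigl(1-\rho(s,x)\bigr)\;+\;\sup_{t>s}B(t,0).
\]

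For the first bound, monotonicity (Lemma \ref{monot}) gives $B(s,0)\le B(s,1)$; applying the tower inequality at $x=1$ together with the estimate $1-\rho(s,1)\le\exp\{-s^3/(3\sigma^2)(1+o(1))\}$ from Lemma \ref{schwieriger} yields
\[
B(s,0)\;\le\;\exp\Bigl\{-\tfrac{s^3}{3\sigma^2}\bigl(1+o(1)\bigr)\Bigr\}\;+\;\sup_{t>s}B(t,0).
\]
Set $L(s):=\sup_{t\ge s}B(t,0)$; by Theorem \ref{Theorem2}, $L(s)\to 0$ as $s\to\infty$. The plan to upgrade this to the stated rate is to iterate the tower inequality along the successive hitting times $s<\tau_0^{(1)}<\tau_0^{(2)}<\cdots$ of level $0$ by the trajectory: by the expected exit-time estimate from the proof of Lemma \ref{lem2} these times advance by a positive amount in expectation, and since $1-\rho(t,1)$ decays super-polynomially, the contributions of later returns are negligible compared with the leading term $1-\rho(s,1)$, so that $L(s)\le\exp\{-s^3/(3\sigma^2)(1+o(1))\}$.

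For the second bound, apply the tower inequality at $x=s-s^{-\alpha}\in(0,s)$:
\[
B(s,s-s^{-\alpha})\;\le\;\bigl(1-\rho(s,s-s^{-\alpha})\bigr)\;+\;L(s).
\]
Here the first term is bounded by $\exp\{-s^{1-2\alpha}/\sigma^2(1+o(1))\}$ via Lemma \ref{schwieriger}, and by the first part of the corollary $L(s)\le\exp\{-s^3/(3\sigma^2)(1+o(1))\}$; since $s^{1-2\alpha}=o(s^3)$ for $\alpha\in[0,1/2)$, the first exponential dominates, and the claim follows. The main obstacle is closing the recursion for $L(s)$: the single-step bound $L(s)\le(1-\rho(s,1))+L(s^+)$ is trivially true and carries no information, so the iteration must genuinely exploit both the time-advance at each return to $0$ and the very fast decay of $1-\rho(\cdot,1)$ to absorb the tail into the $(1+o(1))$ factor.
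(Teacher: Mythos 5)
Your decomposition for the second bound is a valid (and in fact slightly shorter) route than the paper's: you apply the hitting-time tower directly at $x=s-s^{-\alpha}$ and invoke Lemma~\ref{schwieriger}, whereas the paper first passes through the strip $[s-1,s+1]$ via $p^+$ from Lemma~\ref{lem2} and the level $u-1$, and only then to Lemma~\ref{schwieriger}. Both give the same dominant term $\exp\{-\sigma^{-2}s^{1-2\alpha}(1+o(1))\}$, with the contribution of $\sup_{t\ge s}B(t,0)$ subdominant once the first bound is in hand.

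For the first bound, however, there is a genuine gap, and you have correctly located it but not closed it. Your tower inequality gives
\[
B(r,0)\le B(r,1)\le\bigl(1-\rho(r,1)\bigr)+\sup_{t\ge r}B(t,0),
\]
and taking the supremum over $r\ge s$ produces the tautology $L(s)\le\sup_{r\ge s}(1-\rho(r,1))+L(s)$. Your proposed repair — iterating over successive returns to $0$ and invoking a ``time advance'' in expectation — does not work as stated: the return times are random, cycles can be arbitrarily short with positive probability, and the expected-exit-time bound from Lemma~\ref{lem2} concerns the strip $[s-1,s+1]$ around the diagonal rather than excursions between levels $0$ and $1$. Without a uniform lower bound on cycle duration, the sum over cycles does not collapse to the leading term. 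The paper closes the recursion by a different device that you have not used: Lemma~\ref{gegennull} supplies $\chi(s)\to 1$, i.e.\ the probability that the trajectory started at $(s,0)$ stays in $[-1,1]$ forever and converges to $0$. This yields the crucial \emph{multiplicative} prefactor $\P(\tau_1<\infty)\le 1-\bar\chi(s)$ in
\[
B(r,0)\le\bigl(1-\bar\chi(s)\bigr)\Bigl[\bigl(1-\inf_{u\ge s}\rho(u,1)\bigr)+\hat B(s)\Bigr],\qquad r\ge s,
\]
so that after taking the supremum the coefficient of $\hat B(s)$ on the right is $1-\bar\chi(s)<1$, not $1$, and one solves $\hat B(s)\le\bar\chi(s)^{-1}(1-\inf_{u\ge s}\rho(u,1))$. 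In other words, the relevant estimate is not that time advances at each return to $0$, but that with probability tending to $1$ there is \emph{no} excursion to level $1$ at all; your sketch omits Lemma~\ref{gegennull} entirely, and without it the recursion cannot be closed.
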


\begin{proof}
  Define $p^+$ as in the statement of Lemma \ref{lem2}. Then
  $$
  B\big(s,s-s^{-\alpha}\big)\leq p^+\big(s,-s^{-\alpha}\big)+\sup_{u \geq s}B(u,u-1).
  $$
  The last statement in Lemma  \ref{lem2} says that $p^+\big(s,-s^{-\alpha}\big) \leq  \exp\Big\{-\frac 1{\sigma^2}s^{1-2\alpha}\big(1+o(1)\big)\Big\}$. Further, by \eqref{claim1} with $\alpha=0$,
  $$
B(u,u-1)\leq \exp\Big\{-\frac 1{\sigma^2}u   \big(1+o(1)\big) \Big\}+\sup_{r \geq u}B(r,0),
$$
so the second statement in the corollary follows once we have proved the first one.\\

Let $\hat B(s):=\sup_{r \geq s}B(r,0)$ and
define $\chi(s)$ as in Lemma \ref{gegennull}. Then, by Lemma  \ref{gegennull}, $\bar \chi(s):=\inf_{u \geq s}\chi(u)$ converges to 1 as $s \to \infty$. For fixed $s\geq 0$, define the (possibly infinite)
stopping times 
$\lambda_0=s$, $\tau_i=\inf\big\{t \geq \lambda_{i-1}:\,X_{s,t}(0)=1\big\}$, $\lambda_i=\inf\big\{t \geq \tau_i:\,X_{s,t}(0)=0\big\},$ $i \geq 1$. If $\tau_1=\infty$, then there is no blow-up. Note that
$\P\big(\tau_1<\infty\big)\leq 1-\bar \chi(s)$. 
If $\tau_1<\infty$, then the solution after time $\tau_1$ hits $s$ before returning to 0 with probability at most $1-\inf_{u \geq s}\rho(u,1)$. Therefore,   
$$
B(s,0)\leq\P\big(\tau_1<\infty\big) \Big(  \big(1-\inf_{u \geq s}\rho(u,1)\big) +\hat B(s) \Big) \leq   \big( 1-\bar \chi(s)\big)  \Big(  \big(1-\inf_{u \geq s}\rho(u,1)\big) +\hat B(s) \Big)
$$
Inserting the bound \eqref{claim2}, we obtain
$$
B(s,0) \leq \hat B(s) \leq \frac 1{\bar \chi(s)}\exp \Big\{-\frac 1{3\sigma^2}s^3\big(1+o(1)\big)\Big\}
$$
as desired.
\end{proof}

  \section{Proof of Theorem \ref{Theorem3}}
  \begin{proof}[Proof of Theorem \ref{Theorem3}] Note that if, for some $s \geq 0$, $\beta_s(x)<\infty$ for all $x \in \R$, then  $\beta_t(x)<\infty$ for all $x \in \R$ and all $t <s$. Therefore, the function $s \mapsto B(s,-\infty)$ is non-increasing and it suffices to show that $B(s,-\infty)>0$ for all sufficiently large $s$.

 We start by showing that the expected time it takes for a solution starting from $x<0$ at time $s$ to hit 0 is bounded uniformly in $x$ and $s$.   Fix $s\geq 0$ and $x <0$, let $X$ solve \eqref{SDE} with initial condition $(s,x)$ and let $\bar X$ solve
    $$
\d \bar X(t)= \bar X^2(t)\,\d t+\sigma \d W(t),\,t \geq s, \;\bar X(s)=x.
$$

Let $\tau_X$ and $\tau_{\bar X}$ be the first time after $s$ when $X$ respectively $\bar X$ reaches 0. Then $\tau_X\leq \tau_{\bar X}$ and therefore $\E_x\big[\tau_X\big]\leq \E_x\big[\tau_{\bar X}\big]$.

Let $p$ be the scale function of $\bar X$, i.e.
$$
p(x)=\int_0^x\exp\Big\{-\frac 2{\sigma^2}\int_0^y u^2\,\d u\Big\}\,\d y=\int_0^x\exp\Big\{-\frac 2{\sigma^2} \frac{y^3}3\Big\}\,\d y,\; x \in \R.
$$
Then, for $a<x<0$, the expected time for $\bar X$ to hit $\{a,0\}$ starting from $x$ is
$$
M_{a,0}(x)=\frac{p(0)-p(x)}{p(0)-p(a)} \int_a^x\big(p(y)-p(a)\big)\,m(\d y)+\frac{p(x)-p(a)}{p(0)-p(a)} \int_x^0\big(p(0)-p(y)\big)\,m(\d y),
$$
where the {\em speed measure} $m$ on $(-\infty,0]$ is given by
$$
m(\d y)=\frac 2{\sigma^2p'(x)}\,\d y= \frac 2{\sigma^2}\exp\Big\{ \frac 2{\sigma^2} \frac {y^3}3\Big\}\d y 
$$
(Lemma \ref{1ddiff}b)). Then,
$$
\E_x\big[\tau_{\bar X}\big]=\lim_{a \to -\infty}M_{a,0}(x)
$$
and
$$
\sup_{x \in (-\infty,0)}\E_x\big[\tau_{\bar X}\big]=\lim_{x \to -\infty}\E_x\big[\tau_{\bar X}\big]=\int_{-\infty}^0\big(p(0)-p(y)\big)\,\d m(y)=:D<\infty.
$$
In particular, the probability that $X$ starting at $(s,x)$ with $x<0$ reaches the interval $[0,\infty)$ before or at time $s+2D$ is at least $1/2$ (by Markov's inequality).

Next, we show that $u \mapsto B(u,0)$ is bounded away from 0 on compact intervals. Together with the previous result, this implies the statement in Theorem \ref{Theorem3}.

Fix $s \geq 0$. Then, for $t \in [s,s+1]$,
$$
X_{s,t}(0)\geq b(t-s) + \sigma \big(W(t)-W(s)\big),
$$
where $b:=b(s):=\inf_{x \in \R,\, u\in [s,s+1]}\{x(x-u)\}=-\frac 14(s+1)^2$ is a lower bound for the drift on $[s,s+1]$. Hence,
$$
\P\big( X_{s,s+1}(0)\geq s+1\big) \geq \P\big(\sigma W(1)\geq s+1-b\big)
$$
is bounded away from 0 uniformly for $s$ in a compact interval and, using Theorem \ref{Theorem2}, we obtain
\begin{align*}
B(s,0)&\geq \P\big( X_{s,s+1}(0)\geq s+1\big) B(s+1,s+1)\\
&\geq  \P\big(\sigma W(1)\geq s+1-b\big)  B(s+1,s+1) \sim \frac 12 \P\big(\sigma W(1)\geq s+1-b\big),\,s \to \infty.
\end{align*}
This completes the proof of Theorem \ref{Theorem3}.
    \end{proof}
    \begin{rem}
      The proof can be upgraded to a quantitative asymptotic lower bound for both $B(s,0)$ and $B(s,-\infty)$: inserting the explicit value of $b=b(s)$ we get
      $$
B(s,0)\geq \exp\Big\{-\frac 1{8 \sigma^2}s^4\big(1+o(1)\big)\Big\}
$$
(which does not quite match the upper bound in Corollary \ref{coneu}). The same asymptotic lower bound holds for $B(s,-\infty)$. 
      \end{rem}
    \section{Proof of Theorem \ref{Theorem4}}
 \begin{proof}[Proof of Theorem \ref{Theorem4}]
   For every $s > 0$, we have $\P\big(\RR(s)\in \big[s-s^{-\alpha}, s+s^{-\alpha})\big)= B\big(s,s +s^{-\alpha}\big)-B\big(s,s -s^{-\alpha}\big)$. Therefore, Lemma \ref{lemma} and Corollary \ref{coneu} imply
   $$
   \P\big(\RR(s)\notin \big[s-s^{-\alpha}, s+s^{-\alpha})\big)\leq \exp\Big\{-\frac{s^{1-2\alpha}(1+o(1))}{\sigma^2}\Big\}.
   $$
   The last expression summed over all $s=n\in \N$ is finite and therefore, by the first Borel--Cantelli lemma,  we have $\RR(n)\in \big[n-n^{-\alpha}, n+n^{-\alpha}\big)$ for all but finitely many $n \in \N$, almost surely. It remains to show that this property does not only hold for integers but for all (real) $s \geq s_0$ for some random $s_0$.
 
 Fix $\alpha \in (0,1/2)$ (for which we want to show the result) and let $\alpha_0\in (\alpha,1/2)$. Applying the statement above for $\alpha_0$, we see that 
\begin{equation}\label{zero}
n-n^{-\alpha_0} < \RR(n)<n+n^{-\alpha_0}
\end{equation}
for almost every $\omega \in \Omega$ and every $n \geq n_1:=n_0(\alpha_0,\omega)$. 

Let us show the upper bound for $\RR(s)$.  
Fix a deterministic positive integer $n$.  Let $X_s$, $s\in [0,n]$ be the $[-\infty,\infty)$-valued solution of \eqref{SDE} on $[0,n]$ with final condition $X_n=n+n^{-\alpha_0}$ (note that $X_s$ may take the value $-\infty$ in case the process blows up in finite time in the backward time direction). Define
$$
Z_n(t)=X_{n-t}-n+t,\;t\in [0,1].
$$
Then, $Z_n(t)$  solves 
$$
Z_n(t)=n^{-\alpha_0}+t-\int_0^tZ_n(v)\big(Z_n(v)+n-v\big)\,\d v+\sigma\,W_n(t),\;t \in [0,1]
$$ where $W_n(t)=W(n-t)-W(n)$ is again a Wiener process. Once we know that
\begin{equation}\label{once}
\sum_{n\in\N}\P\big( \sup_{t \in [0,1]}Z_n(t) \geq n^{-\alpha}\big)<\infty
\end{equation}
then the upper bound follows from \eqref{zero} and the first Borel--Cantelli lemma.

To show \eqref{once}, let $Y_n$ solve 
$$
\d Y_n(t)=\big(2-nY_n(t)\big)\,\d t +\sigma\,\d W_n(t),\;Y_n(0)=n^{-\alpha_0}.
$$
 Then, 
\begin{equation}\label{ugl}
Y_n(t) \geq Z_n(t)
\end{equation}
for all $t \in [0,1]$ and
\begin{align}\label{oben}
Y_n(t)&=n^{-\alpha_0}\e^{-nt}+ 2\int_0^t\e^{-n(t-s)}\,\d s+\sigma \int_0^t\e^{-n(t-s)}\,\d W_n(s) \nonumber \\
&\leq n^{-\alpha_0}+\frac 2n +\sigma \int_0^t\e^{-n(t-s)}\,\d W_n(s) \nonumber \\
&\leq 3n^{-\alpha_0} +\sigma \int_0^t\e^{-n(t-s)}\,\d W_n(s).
\end{align}

Before finishing the proof of the upper bound, let us see how things change for the lower bound. For $n \in \N$, define $\tilde Z_n$ like $Z_n$ above but with initial condition $\tilde Z_n(0)=-n^{-\alpha_0}$ instead of $n^{-\alpha_0}$ and let $\tilde Y_n(t)$ solve
$$
\d \tilde Y_n(t)=-n\tilde Y_n(t) \,\d t+\sigma \,\d W_n(t),\;\tilde Y_n(0)=-n^{-\alpha_0}.
$$
Then $\tilde Y_n(t)\leq \tilde Z_n(t)$ as long as $\tilde Y_n\geq -1/2$ and
\begin{equation}\label{unten}
\tilde Y_n(t)=-n^{-\alpha_0}\e^{-nt}+\sigma \int_0^t\e^{-n(t-s)}\,\d W_n(s).
\end{equation}

The  integral $ \int_0^t\e^{-n(t-s)}\,\d W_n(s)$ in \eqref{oben} and \eqref{unten} is the solution $R_n(t)$ of the equation
$$
\d R_n(t)=-n\,\d t +\d W_n(t),\; R_n(0)=0.
$$
For every $p>2$,  Lemma \ref{ESVlemma} implies that
\begin{equation}\label{Rn}
\E \Big[\sup_{0\leq t \leq 1}|R_n(t)|^p\Big]\leq C_p n^{1-\frac p2}. 
\end{equation}
for some constant $C_p$.


Therefore, by \eqref{ugl} and Markov's inequality,
\begin{align*}
\P\big(\sup_{t \in [0,1]} Z_n(t)\geq n^{-\alpha}\big) &\leq \P\big(\sup_{t \in [0,1]} Y_n(t)\geq n^{-\alpha}\big)\\
                                                   &\leq   \P\Big(\sup_{t \in [0,1]} \big|R_n(t)\big|\geq \frac 1\sigma \big(n^{-\alpha}-3n^{-\alpha_0}\big)\Big)\\
                                                   &\leq \Big(\frac 1\sigma \big(n^{-\alpha}-3n^{-\alpha_0}\big)\Big)^{-p}\E\Big[\sup_{0\leq t \leq 1}|R_n(t)|^p\Big]\\
&\leq   C_p  n^{1-\frac p2} \Big(\frac 1\sigma \big(n^{-\alpha}-3n^{-\alpha_0}\big)\Big)^{-p},
\end{align*}
for  $n$ large enough (such that $n^{-\alpha}>3n^{-\alpha_0}$). The last term summed over $n$ is finite provided $p$ is large enough, so 
assertion \eqref{once} holds and the proof of the upper bound is complete. Analogously, for $n$ and $p$ sufficiently large,
$$
\P\big(\inf_{t \in [0,1]} \tilde Z_n(t)\leq -n^{-\alpha}\big) \leq  C_p  n^{1-\frac p2} \Big(\frac 1\sigma \big(n^{-\alpha}-n^{-\alpha_0}\big)\Big)^{-p}
$$
is summable over $n$. Therefore,
$$
\lim_{s_0 \to \infty}\P\Big( \RR(s) \in \big(s-s^{-\alpha},s+ s^{-\alpha}\big) \mbox{ for all } s \geq s_0\Big)=1
$$
for every $\alpha \in (0,1/2)$. This implies the statement of Theorem \ref{Theorem4}.
\end{proof}

  \section{Remarks and Conclusions} \label{conc}
In this paper, using as an example the SDE (\ref{SDE}), we have identified
and investigated the stochastic equivalent of a repelling river. Several
open questions remain, e.g.~generalizations to other SDEs possibly driven by more general noise and other drift functions.  One open question (reminiscent of the law of the iterated logarithm for Brownian motion) is to find  decreasing positive deterministic functions $l_1$ and $l_2$ such that
$\limsup_{s \to \infty}\frac{\RR(s)-s}{l_1(s)}=1$ and  $\liminf_{s \to \infty}\frac{\RR(s)-s}{l_2(s)}=-1$ almost surely. If such functions exist then Remark \ref{inft} shows that they decrease (slightly) more slowly than
$s^{-1/2}$. Using Corollary \ref{Rbar} it should not be too hard to find such functions by investigating the question for $\overline{\scr{R}}$ instead of $\scr{R}$. 

In the deterministic context, rivers can be
located in the absence of an exact solution by asymptotics
\cite{Blais} or by using the Wa\.zewski principle \cite{Srz,Waz}. We
leave it to future work to determine whether similar methods could be
tailored to work in the stochastic case.

\begin{appendices}

\section{Appendix}
We state some well-known results about scalar diffusion processes which we use in this paper many times. The proofs can be found in \cite[Section 5.5.5]{KS}. Statement c) is often referred to as {\em Feller's test of explosion}. We only consider the additive noise case.
\begin{lem}\label{1ddiff}
  Consider the scalar SDE
  \begin{equation}\label{solu}
\d Z(t) = b(Z(t))\, \d t + \sigma \,\d W(t), 
\end{equation}
where $\sigma>0$, $b:\R \to \R$ is locally Lipschitz continuous and $W$ is one-dimensional Brownian motion.
 
Fix some $c \in \R$ and define the {\em scale function}
$$
p(x):=\int_c^x\exp\Big\{ -\frac 2{\sigma^2}\int_c^\xi b(\zeta)\,\d \zeta\Big\}\,\d \xi,\;x \in \R.
$$
Let $-\infty\leq l < c<r \leq \infty$, let $Z$ be  the solution of \eqref{solu} with initial condition $x \in (l,r)$ and define  the exit time from the interval $(l,r)$ by $S:=\inf\{t \geq 0: Z(t) \notin (l,r)\}$. We define $p(r):=\lim_{y \uparrow r}p(y)$ if $r=\infty$ and similarly for $p(l)$. 
\begin{itemize}
\item [a)] If $p(l)>-\infty$ and $p(r)<\infty$ then $Z$ satisfies 
\begin{equation}\label{ratio}
\P\Big(\lim_{t \to S}Z(t)=l    \Big)=1-\P\Big(\lim_{t \to S}Z(t)=r    \Big)=\frac{p(r)-p(x)}{p(r)-p(l)}.
\end{equation}
(Note that the scale function depends on $c$ but the right hand side of \eqref{ratio} doesn't.)

\item[b)] If $p(l)>-\infty$ and $p(r)<\infty$, then
$$
\E\big[S\big]= -\int_l^x\big(p(x)-p(y)\big)\,m(\d y)+  \frac{p(x)-p(l)}{p(r)-p(l)}\int_l^r\big(p(r)-p(y)\big)\,m(\d y),
$$
where
$$
m(\d y):=\frac 2{p'(y)\sigma^2}\d y,\;y\in \R
$$
is the {\em speed measure} of $Z$.
\item[c)]
  Let
  $$
v(x):=\int_c^xp'(y)\int_c^y\frac 2{p'(z)\sigma^2}\,\d z\,\d y=\int_c^x\big(p(x)-p(y)\big)\,m(\d y).
$$
Then $\P (S=\infty)=1$ iff $v(l)=v(r)=\infty$. If $p(l)=-\infty$ and $v(r)<\infty$, then $S<\infty$ and $\lim_{t \uparrow S}Z(t)=\infty$ almost surely.  
  \end{itemize}
\end{lem}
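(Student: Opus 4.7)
The plan is to follow the standard program for one-dimensional diffusions in natural scale: use the scale function $p$ to turn $Z$ into a continuous local martingale, and then apply optional stopping together with direct ODE arguments. By construction $p\in C^2(\R)$, with $p'(y)=\exp\{-\tfrac{2}{\sigma^2}\int_c^y b(\zeta)\,\d\zeta\}$ and $p''(y)=-\tfrac{2 b(y)}{\sigma^2}p'(y)$, so that $\tfrac12\sigma^2 p''(y)+b(y)p'(y)\equiv 0$. Itô's formula applied to $p(Z(t))$ therefore kills the drift and gives $\d p(Z(t))=\sigma p'(Z(t))\,\d W(t)$, so $p(Z(t\wedge S))$ is a continuous local martingale.

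For part (a), under the hypotheses $p(l)>-\infty$ and $p(r)<\infty$ the stopped process $p(Z(t\wedge S))$ takes values in $[p(l),p(r)]$ and is therefore a genuine uniformly integrable martingale. Part (c), which I would prove first, guarantees $S<\infty$ almost surely under these hypotheses, and then the optional stopping theorem yields $p(x)=p(l)\,q_l+p(r)\,q_r$, where $q_l,q_r$ are the probabilities that $Z$ tends to $l$ or $r$, respectively; since $q_l+q_r=1$, solving gives \eqref{ratio}. For part (b), I would solve the boundary value problem $\tfrac12\sigma^2 u''(x)+b(x)u'(x)=-1$ on $(l,r)$ with $u(l)=u(r)=0$; a direct double integration using $p$ and the speed measure $m$ verifies that the solution coincides with the formula stated in the lemma. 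Itô applied to $u(Z)$ then shows that $u(Z(t\wedge S))+t\wedge S$ is a bounded local martingale, and taking expectations while letting $t\to\infty$ gives $\E[S]=u(x)$.

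Part (c), Feller's test of explosion, is the technical heart. I would reduce to Brownian motion via the Dambis--Dubins--Schwarz theorem: because $p(Z)$ is a continuous local martingale with quadratic variation $\<p(Z)\>_t=\int_0^t\sigma^2 p'(Z(u))^2\,\d u$, there is a Brownian motion $\tilde W$ (possibly on an enlarged probability space) with $p(Z(t))=p(x)+\tilde W(\<p(Z)\>_t)$. The function $v$ defined in the statement encodes expected occupation times against the speed measure $m$, and a careful unwinding of these integrals shows that $v(l)=v(r)=\infty$ is equivalent to $Z$ being unable to reach either boundary in finite time; the one-sided variant follows from the same bookkeeping together with a comparison between $v(r)$ and the expected time for the time-changed Brownian motion to exit. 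The main obstacle I anticipate is the careful handling of boundaries when $l=-\infty$ or $r=\infty$ and when $p$ is unbounded: this is done by localization, exhausting $(l,r)$ by a sequence of compact subintervals $(l_n,r_n)$, passing to the limit in parts (a) and (b), and then using monotone convergence to identify the correct boundary classification (attracting versus absorbing versus inaccessible). Since these arguments are completely standard and carried out in full in \cite[\S5.5]{KS}, I would defer to that reference rather than reproduce the calculations here.
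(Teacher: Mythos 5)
Your proposal follows essentially the same route as the paper: the paper's ``proof'' of this lemma is nothing more than citations to Karatzas--Shreve \S 5.5 (Prop.~5.5.22(d) for part (a), eqs.~(5.5.55) and (5.5.59) for part (b), Theorem~5.5.29 and Prop.~5.5.32 for part (c)), and your sketch describes the standard scale-function / speed-measure / Dambis--Dubins--Schwarz machinery behind exactly those results and then likewise defers to \cite[\S 5.5]{KS} for the details. One small logical slip in the sketch of part (a): you claim part (c) guarantees $S<\infty$ almost surely when $p(l)>-\infty$ and $p(r)<\infty$, but Feller's test as stated only gives $\P(S=\infty)=1$ iff $v(l)=v(r)=\infty$, and boundedness of $p$ does not force $v(l)$ or $v(r)$ to be finite (the speed-measure factor $1/p'$ can blow up near the boundary even when $p$ stays bounded). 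Moreover, when $l=-\infty$ the conclusion of (a) is compatible with $S=\infty$ on the event $\{Z(t)\to -\infty\}$, so $S<\infty$ is neither implied nor needed. The correct argument (KS Prop.~5.5.22(d)) obtains $\P(\lim_{t\to S}Z(t)\in\{l,r\})=1$ directly from the boundedness of the local martingale $p(Z(\cdot\wedge S))$ together with the time change, without invoking Feller's test; since you defer to KS in the end, this is a misstep in the sketch rather than a genuine gap.
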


\begin{proof} \indent

  \begin{itemize}
    \item[a)] \cite[Proposition 5.5.22d)]{KS}
    \item[b)] \cite[(5.5.55),(5.5.59)]{KS}
      \item[c)] \cite[Theorem 5.5.29, Proposition 5.5.32]{KS}
      \end{itemize}
\end{proof}

The following result is \cite[Lemma 2.2]{ESV} in which we replaced the constant $a_{p,\mu}$ by its numerical value given in the proof. 
\begin{lem}\label{ESVlemma}
  Let $p\in (2,\infty)$ and let $W(t),\,t \geq 0$ be a standard Brownian motion on a filtered probability space $(\Omega,\F,(\F_t)_{t \geq 0},\P)$. Further, let $\eta:[0,\infty)\times \Omega \to \R$
  be a progressively measurable process such that, for each $T \in (0,\infty)$,
  $$
\E \bigg[\int_0^T|\eta(s,\omega)|^p\,\d s\bigg]<\infty.
$$
For $\mu>0$ let $v_\mu(t), t\geq 0$ be the unique solution of the equation
\begin{equation*}
\left\{
  \begin{array}{rl}
    dv(t)&=-\mu v(t)\,\d t + \eta(t,\omega) \,\d W(t), \quad t\geq 0\\
    v(0)&=0. 
  \end{array}
  \right.
\end{equation*}
Then, for each $T \in (0,\infty)$,
$$
\E \bigg[ \sup_{0 \leq t \leq T}|v_\mu(t)|^p\bigg] \leq a_{p,\mu}\cdot\E \bigg[ \int_0^T|\eta(s,\omega)|^p\,\d s\bigg],
$$
where, for some constant $\gamma_p\in (0,\infty)$,
$$
a_{p,\mu}=\gamma_p\mu^{1-\frac p2}.
$$
\end{lem}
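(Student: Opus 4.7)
The proof will rely on It\^o's formula applied to $x \mapsto |x|^p$, combined with a Young-type inequality that absorbs the It\^o correction term into the dissipation coming from the drift $-\mu v_\mu$. Writing $v_\mu$ explicitly as the stochastic convolution $v_\mu(t) = \int_0^t e^{-\mu(t-s)}\eta(s)\,\d W(s)$ is useful intuition, but the cleanest path goes through the It\^o route.

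I would apply It\^o's formula to $|v_\mu|^p$, which is $C^2$ for $p>2$, obtaining the drift $-\mu p |v_\mu|^p + \frac{p(p-1)}{2}|v_\mu|^{p-2}\eta^2$ and a local martingale $N_t := \int_0^t p|v_\mu|^{p-2}v_\mu\,\eta\,\d W$. Next, Young's inequality with conjugate exponents $p/(p-2)$ and $p/2$ and a scaling parameter $\varepsilon > 0$ gives
$$|v|^{p-2}\eta^2 \le \tfrac{p-2}{p}\varepsilon^{p/(p-2)}|v|^p + \tfrac{2}{p}\varepsilon^{-p/2}|\eta|^p.$$
Choosing $\varepsilon$ so that $\tfrac{p(p-1)}{2}\cdot\tfrac{p-2}{p}\varepsilon^{p/(p-2)} = \tfrac{\mu p}{2}$ forces $\varepsilon^{-p/2}$ to scale like $\mu^{1-p/2}$, producing the key pointwise differential inequality
$$\d |v_\mu|^p \le -\tfrac{\mu p}{2}|v_\mu|^p\,\d t + C_p\mu^{1-p/2}|\eta|^p\,\d t + \d N_t.$$

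Taking expectations and invoking Gronwall yields the pointwise bound $\E|v_\mu(t)|^p \le C_p\mu^{1-p/2}\E\int_0^t|\eta|^p\,\d s$. For the supremum, I would take $\sup_{t \le T}$ before expectation (discarding the favorable dissipation term), which leaves the deterministic integral plus a martingale supremum. The Burkholder--Davis--Gundy inequality applied to $N$ produces the term $\E\bigl[\bigl(\int_0^T |v_\mu|^{2(p-1)}\eta^2\,\d s\bigr)^{1/2}\bigr]$; bounding this by $\E\bigl[\sup_{t\le T}|v_\mu|^{p-1}\bigl(\int_0^T \eta^2\,\d s\bigr)^{1/2}\bigr]$ and invoking Young once more with exponents $p/(p-1)$ and $p$ absorbs a small multiple of $\E\sup|v_\mu|^p$ into the left-hand side.

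The main obstacle is obtaining the bound $a_{p,\mu} = \gamma_p \mu^{1-p/2}$ without spurious $T$-dependence: converting the quadratic variation $\int e^{2\mu s}\eta^2\,\d s$ of the associated exponential martingale back into $\int|\eta|^p\,\d s$ via H\"older naively costs powers of $T$ or of $e^{\mu T}$. The exponential dissipation rate $\mu$ is precisely what should absorb those factors, and the cleanest way I see to exploit it is to partition $[0, T]$ into intervals of length of order $1/\mu$, apply the pointwise moment bound at interval endpoints and a local BDG inside each interval; the $\mu$-scaling of interval lengths then yields the correct $\mu^{1-p/2}$ prefactor after summing. An alternative that avoids the decomposition is the factorization method of Da Prato--Kwapie\'n--Zabczyk for stochastic convolutions, which reduces the sup-norm estimate to an $L^p$ estimate of an auxiliary process and a deterministic bound on a singular integral of the semigroup kernel.
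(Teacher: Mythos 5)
The paper gives no proof of this lemma; it is quoted verbatim (with the constant made explicit) from Lemma~2.2 of the reference [ESV], so there is nothing in the paper to compare your argument against. That said, your outline is on the right track: you correctly diagnose that the straightforward scheme (It\^o on $|v|^p$, Young to create dissipation, BDG on the martingale $N$ over all of $[0,T]$, Young again to absorb $\E\sup|v|^p$) cannot by itself give a $T$-uniform constant, because BDG over the whole interval never sees the exponential damping; and the fix you propose, namely partitioning $[0,T]$ into roughly $\mu T$ blocks of length $1/\mu$ and estimating locally, is a sound one.

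The step you leave implicit, and which is precisely where the argument could break, is why summing roughly $\mu T$ local contributions does not reintroduce a factor of $T$: each block contributes a term of order $\mu^{1-p/2}\E\int_{t_{k-1}}^{t_k}|\eta|^p\,\d s$ \emph{plus} a term involving $\E|v(t_{k-1})|^p$, and the latter terms naively sum to $\mu T\cdot\mu^{1-p/2}\E\int_0^T|\eta|^p\,\d s$. Two ways to close the gap. (i) Retain the exponential kernel that Gronwall actually gives, $\E|v(t)|^p\leq C_p\mu^{1-p/2}\int_0^t e^{-\mu p(t-s)/2}\,\E|\eta(s)|^p\,\d s$, rather than the coarser bound you wrote; then the endpoint values $\E|v(t_{k-1})|^p$ at $t_{k-1}=(k-1)/\mu$ sum geometrically in $k$, killing the factor $\mu T$. (ii) Avoid Gronwall entirely: with $a_k:=|v(t_k)|$ and $b_k:=\sup_{[t_{k-1},t_k]}\big|\int_{t_{k-1}}^{\cdot}e^{-\mu(\cdot-s)}\eta\,\d W(s)\big|$, the variation-of-constants formula gives $a_k\leq e^{-1}a_{k-1}+b_k$ with $a_0=0$; Minkowski plus Young's convolution inequality in $\ell^p$ over $k$ yield $\sum_k\E a_k^p\leq (1-e^{-1})^{-p}\sum_k\E b_k^p$, and the local BDG-plus-H\"older estimate on each block gives $\E b_k^p\leq C_p e^p\mu^{1-p/2}\E\int_{t_{k-1}}^{t_k}|\eta|^p\,\d s$ (the $e^p$ comes from $e^{p\mu(t_k-t_{k-1})}=e^p$). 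In either form, the geometric contraction $e^{-1}$ per block of length $1/\mu$ is exactly what replaces the dissipation that the global BDG step discards. The Da Prato--Kwapie\'n--Zabczyk factorization you mention as a fallback also works and may well be closer to what [ESV] actually does.
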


\end{appendices}


\bmhead{Acknowledgements}

Michael Scheutzow acknowledges financial
support from the London Mathematical Society, grant No. 22203.

\bibliography{SG}


\begin{thebibliography}{7}
\ifx \bisbn   \undefined \def \bisbn  #1{ISBN #1}\fi
\ifx \binits  \undefined \def \binits#1{#1}\fi
\ifx \bauthor  \undefined \def \bauthor#1{#1}\fi
\ifx \batitle  \undefined \def \batitle#1{#1}\fi
\ifx \bjtitle  \undefined \def \bjtitle#1{#1}\fi
\ifx \bvolume  \undefined \def \bvolume#1{\textbf{#1}}\fi
\ifx \byear  \undefined \def \byear#1{#1}\fi
\ifx \bissue  \undefined \def \bissue#1{#1}\fi
\ifx \bfpage  \undefined \def \bfpage#1{#1}\fi
\ifx \blpage  \undefined \def \blpage #1{#1}\fi
\ifx \burl  \undefined \def \burl#1{\textsf{#1}}\fi
\ifx \doiurl  \undefined \def \doiurl#1{\url{https://doi.org/#1}}\fi
\ifx \betal  \undefined \def \betal{\textit{et al.}}\fi
\ifx \binstitute  \undefined \def \binstitute#1{#1}\fi
\ifx \binstitutionaled  \undefined \def \binstitutionaled#1{#1}\fi
\ifx \bctitle  \undefined \def \bctitle#1{#1}\fi
\ifx \beditor  \undefined \def \beditor#1{#1}\fi
\ifx \bpublisher  \undefined \def \bpublisher#1{#1}\fi
\ifx \bbtitle  \undefined \def \bbtitle#1{#1}\fi
\ifx \bedition  \undefined \def \bedition#1{#1}\fi
\ifx \bseriesno  \undefined \def \bseriesno#1{#1}\fi
\ifx \blocation  \undefined \def \blocation#1{#1}\fi
\ifx \bsertitle  \undefined \def \bsertitle#1{#1}\fi
\ifx \bsnm \undefined \def \bsnm#1{#1}\fi
\ifx \bsuffix \undefined \def \bsuffix#1{#1}\fi
\ifx \bparticle \undefined \def \bparticle#1{#1}\fi
\ifx \barticle \undefined \def \barticle#1{#1}\fi
\bibcommenthead
\ifx \bconfdate \undefined \def \bconfdate #1{#1}\fi
\ifx \botherref \undefined \def \botherref #1{#1}\fi
\ifx \url \undefined \def \url#1{\textsf{#1}}\fi
\ifx \bchapter \undefined \def \bchapter#1{#1}\fi
\ifx \bbook \undefined \def \bbook#1{#1}\fi
\ifx \bcomment \undefined \def \bcomment#1{#1}\fi
\ifx \oauthor \undefined \def \oauthor#1{#1}\fi
\ifx \citeauthoryear \undefined \def \citeauthoryear#1{#1}\fi
\ifx \endbibitem  \undefined \def \endbibitem {}\fi
\ifx \bconflocation  \undefined \def \bconflocation#1{#1}\fi
\ifx \arxivurl  \undefined \def \arxivurl#1{\textsf{#1}}\fi
\csname PreBibitemsHook\endcsname

\bibitem[\protect\citeauthoryear{Blais}{1991}]{Blais}
\begin{bchapter}
\bauthor{\bsnm{Blais}, \binits{F.}}:
\bctitle{Asymptotic expansions of rivers}.
In: \bbtitle{Dynamic Bifurcations: Proceedings of a Conference Held in Luminy,
  France, March 5--10, 1990},
pp. \bfpage{181}--\blpage{189}
(\byear{1991}).
\bcomment{Springer}
\end{bchapter}
\endbibitem

\bibitem[\protect\citeauthoryear{Diener}{1985}]{Diener}
\begin{botherref}
\oauthor{\bsnm{Diener}, \binits{F.}}:
Fleuves et vari{\'e}t{\'e}s centrales.
Ast{\'e}risque,
150--151
(1985)
\end{botherref}
\endbibitem

\bibitem[\protect\citeauthoryear{Klenke}{2020}]{Klenke}
\begin{bbook}
\bauthor{\bsnm{Klenke}, \binits{A.}}:
\bbtitle{Probability Theory: a Comprehensive Course}.
\bpublisher{Springer},
\blocation{New York}
(\byear{2020})
\end{bbook}
\endbibitem

\bibitem[\protect\citeauthoryear{Karatzas and Shreve}{1991}]{KS}
\begin{bbook}
\bauthor{\bsnm{Karatzas}, \binits{I.}},
\bauthor{\bsnm{Shreve}, \binits{S.}}:
\bbtitle{Brownian Motion and Stochastic Calculus, Second Edition}
vol. \bseriesno{Graduate texts in mathematics}.
\bpublisher{Springer},
\blocation{New York}
(\byear{1991})
\end{bbook}
\endbibitem

\bibitem[\protect\citeauthoryear{Srzednicki}{2004}]{Srz}
\begin{bchapter}
\bauthor{\bsnm{Srzednicki}, \binits{R.}}:
\bctitle{Wa{\.z}ewski method and {C}onley index}.
In: \bbtitle{Handbook of Differential Equations: Ordinary Differential
  Equations}
vol. \bseriesno{1},
pp. \bfpage{591}--\blpage{684}.
\bpublisher{Elsevier},
\blocation{London, UK}
(\byear{2004})
\end{bchapter}
\endbibitem

\bibitem[\protect\citeauthoryear{Wa{\.z}ewski}{1947}]{Waz}
\begin{barticle}
\bauthor{\bsnm{Wa{\.z}ewski}, \binits{T.}}:
\batitle{Sur un principe topologique de l'examen de l'allure asymptotique des
  int{\'e}grales des {\'e}quations diff{\'e}rentielles ordinaires}.
\bjtitle{Annales de la Soci{\'e}t{\'e} Polonaise de Math{\'e}matique}
\bvolume{20},
\bfpage{279}--\blpage{313}
(\byear{1947})
\end{barticle}
\endbibitem

\bibitem[\protect\citeauthoryear{Es-Sarhir et~al.}{2010}]{ESV}
\begin{barticle}
\bauthor{\bsnm{Es-Sarhir}, \binits{A.}},
\bauthor{\bsnm{Scheutzow}, \binits{M.}},
\bauthor{\bsnm{Gaans}, \binits{O.}}:
\batitle{Invariant measures for stochastic functional differential equations
  with superlinear drift term}.
\bjtitle{Differential Integral Equations}
\bvolume{23},
\bfpage{189}--\blpage{200}
(\byear{2010})
\end{barticle}
\endbibitem

\end{thebibliography}

\end{document}